\documentclass[11pt,a4paper]{article}
\usepackage{amssymb,latexsym}
\usepackage{bm, amsmath, amssymb, amsthm}

\topmargin=-20 true mm
\oddsidemargin=-0 true mm
\evensidemargin=-0 true mm
\setlength{\textheight}{256 true mm}  
\setlength{\textwidth}{154 true mm}

\newtheorem{theorem}{Theorem}
\newtheorem{corollary}{Corollary}
\newtheorem{lemma}{Lemma}
\newtheorem{proposition}{Proposition}
\newtheorem{remark}{Remark}

\def\RR{\mathbb{R}}
\def\Ric{\operatorname{Ric}}
\def\tr{\operatorname{Tr\,}}
\def\Div{\operatorname{div}}
\def\vol{\operatorname{vol}}

\title{From the Ricci flow evolution equation to vanishing theorems for Ricci solitons}

\author{Vladimir Rovenski\footnote{Department of Mathematics, University of Haifa, Mount Carmel, Haifa, 31905, Israel.
E-mail: vrovenski@univ.haifa.ac.il},
\ Sergey Stepanov\footnote{Department of Mathematics, Russian Institute for Scientific
and Technical Information of the Russian Academy of Sciences, 20, Usievicha street, 125190 Moscow, Russia.
E-mail: s.e.stepanov@mail.ru}
\ and \ Irina Tsyganok\footnote{Department of Data Analysis and Financial Technologies,
Finance University, 49-55, Leningradsky Prospect, 125468 Moscow, Russia.
E-mail: i.i.tsyganok@mail.ru}
}

\begin{document}

\date{}

\maketitle

\begin{abstract}
In the paper, we study evolution equations of the scalar and Ricci curvatures  under the Hamilton's Ricci~flow on a closed  manifold and on a complete noncompact manifold.
In particular, we study conditions when the Ricci flow is trivial and the Ricci soliton is Ricci flat or Einstein.

\smallskip

\textbf{Keywords}:
Riemannian manifold, evolution equations, Ricci and scalar curvatures, subharmonic and superharmonic functions,
Ricci soliton

\textbf{Mathematics Subject Classification 2000}: 53C20, 53C25, 53C40

\end{abstract}

\section{Introduction}

R.\,Hamilton introduced in \cite{36} an evolution equation method, which is called the Hamilton's Ricci flow, and proved that every closed
(i.e., compact without boundary) three-dimensional Riemannian manifold with positive Ricci curvature admits a metric of constant positive curvature. His work immediately led many mathematicians to study the Hamilton's Ricci flow and other evolution equations arising in differential geometry.
Self-similar solutions to the Hamilton's Ricci flow are called the Ricci solitons, they play an important role in the study of singularities of the flow. Over the past two decades, the geometry of Ricci solitons is the focus of attention of many~mathematicians  (see, for example, \cite{1,MT}).
In the present paper, we consider the evolution equations of the scalar and Ricci curvatures on a closed and complete noncompact manifolds under Hamilton's Ricci flow. In particular, we study conditions when the Hamilton's Ricci flow is trivial. For this, the concept of a stochastic complete metric is used.
We also consider applications of the obtained results on Hamilton's Ricci flow in the theory of Ricci solitons.
In~particular, we study conditions when a Ricci soliton is Einstein or trivial.  One of the attractive aspects of the work:
the results on the Hamilton's Ricci flow (Lemmas~\ref{T-01} and \ref{T-03}) have their analogues in the theory of Ricci solitons (Theorems~\ref{T-04} and \ref{T-06}).

\section{Evolution of the scalar curvature under the
Ricci~flow}

Given a smooth manifold $M$ with a family $g(t)$ of Riemannian metrics, defined for a time interval $J\subset[0,\infty)$,
the \textit{Hamilton's Ricci flow} equation is
\begin{equation}\label{GrindEQ__1_}
 \frac{\partial}{\partial t}\,g(t)=-2\Ric_t.
\end{equation}
Here, $\Ric_t$ is the Ricci tensor of the metric $g(t)$.
It is well known that for any $C^{\infty}$-metric $g_{0}$ on a closed manifold $M$,
there exists a unique solution $g(t),\ t\in J=[0,\varepsilon)$, to the flow \eqref{GrindEQ__1_} for some $\varepsilon>0$
and $g(0)=g_{0}$ (see \cite{36}).
Moreover, if $(M,g_{0})$ is a closed Riemannian manifold with positive scalar curvature $s_{0}$ (a function on $M$),
then for a solution $g(t),\ t\in J$, to \eqref{GrindEQ__1_} with $g(0)=g_{0}$ we have (see \cite[p. 102]{1})
\[
 \varepsilon\le\frac{n}{2\,s_{\min}(0)}<\infty,
\]
where $s_{\min}(0)=\min\limits_{x\in M}s_0(x)$.
Under the Hamilton's Ricci flow, we have an \textit{evolution equation} for the scalar curvature $s(t)=\tr_{g(t)}\Ric_t$
in the following form (see \cite[p.~99]{1}):
\begin{equation}\label{GrindEQ__2_}
 \frac{\partial}{\partial t}\,s(t)=\Delta_{g(t)}\,s(t) + 2\,\|\Ric_t\|^2_{g(t)} .
\end{equation}
Here, $\Delta_{g(t)}=\tr_{g(t)}(\nabla^{g(t)})^{2}$ is the Laplace-Beltrami operator, defined as the trace of the~second covariant derivative for the Levi-Civita connection $\nabla^{g(t)}$ corresponding to the metric $g(t)$ (see also \cite[p. 17]{1}).
A standard scalar maximum principle argument applied to \eqref{GrindEQ__2_} proves that the minimum of the scalar curvature is a non-decreasing function of time $t$. In~addition, it shows that if $s_{\min}(0)>0$ then $s_{\min}(t)$ changes over time as follows
(see also \cite[p.~xxii]{MT}):
\[
 s_{\min}(t)\ge 1\big/\Big({\frac{n}{s_{\min}(0)}-2\,t}\Big)
\]
for $t\in J=[0,\,\varepsilon)$, where $\varepsilon\le\frac{n}{2\,s_{\min}(0)}$, and for a closed Riemannian manifold $(M, g_{0})$ with $s_0>0$. Therefore, $s_{\min}(t)>0$ and the following holds:

\textit{If $(M,g_{0})$ is a closed Riemannian manifold with positive scalar curvature, then for a~solution $g(t)$, $t\in[0,\; \varepsilon)$, to \eqref{GrindEQ__1_} with $g(0)=g_{0}$ and $s_{\min}(0)>0$ we have $s_{\min}(t)>0,\ t\in[0,\; \varepsilon)$}.

Throughout the paper, we  consider \eqref{GrindEQ__1_} without requiring that the scalar curvature $s_0$ is positive.

By the following proposition, the scalar curvature
cannot be a decreasing (in average) function under a nontrivial Hamilton's Ricci flow on a closed Riemannian manifold.

\begin{lemma}\label{T-01}
Let $M$ be a smooth closed manifold with the Hamilton's Ricci flow \eqref{GrindEQ__1_} for a fami\-ly $g(t)$ of Riemannian metrics defined on a time interval $J\subset[0,\infty)$. If
the scalar curvature $s(t)$ for $t\in J$
is a \textit{decreasing in average} function under the
flow \eqref{GrindEQ__1_},
that~is
\begin{equation}\label{GrindEQ__0_}
 \int_M\frac{\partial}{\partial t}\,s(t)\,d\vol_{g(t)}\le 0,\quad t\in J,
\end{equation}
then the flow is~trivial.
\end{lemma}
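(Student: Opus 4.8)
The plan is to integrate the evolution equation \eqref{GrindEQ__2_} over the closed manifold $M$ with respect to the time-dependent Riemannian measure $d\vol_{g(t)}$, and to exploit the fact that on a closed manifold the integral of a Laplacian vanishes. Fixing $t\in J$ and integrating both sides of \eqref{GrindEQ__2_}, I would write
\[
 \int_M\frac{\partial}{\partial t}\,s(t)\,d\vol_{g(t)}
 =\int_M\Delta_{g(t)}\,s(t)\,d\vol_{g(t)}+2\int_M\|\Ric_t\|^2_{g(t)}\,d\vol_{g(t)}.
\]

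Next I would argue that the first term on the right vanishes. Since $M$ is closed and $s(t)$ is smooth, and $\Delta_{g(t)}\,s(t)=\Div_{g(t)}\big(\nabla^{g(t)}s(t)\big)$, the divergence theorem (Green's formula) gives $\int_M\Delta_{g(t)}\,s(t)\,d\vol_{g(t)}=0$. Hence the averaged evolution reduces to
\[
 \int_M\frac{\partial}{\partial t}\,s(t)\,d\vol_{g(t)}
 =2\int_M\|\Ric_t\|^2_{g(t)}\,d\vol_{g(t)}\ge 0,
\]
so the average of $\partial_t\,s(t)$ is automatically nonnegative for every $t\in J$, independently of any hypothesis.

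Finally, combining this identity with the assumption \eqref{GrindEQ__0_} would force
\[
 \int_M\|\Ric_t\|^2_{g(t)}\,d\vol_{g(t)}=0
\]
for each $t\in J$. As the integrand is continuous and nonnegative, this yields $\Ric_t\equiv 0$ on $M$ for every $t\in J$; substituting into the flow equation \eqref{GrindEQ__1_} gives $\frac{\partial}{\partial t}\,g(t)=-2\,\Ric_t=0$, so $g(t)$ does not depend on $t$ and the flow is trivial (with $g_0$ Ricci flat).

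The argument is short, so I do not expect a genuine obstacle; the single point that carries the whole proof is the observation that integrating against $d\vol_{g(t)}$ annihilates the Laplacian term and thereby converts the one-sided sign condition \eqref{GrindEQ__0_} into the vanishing of $\int_M\|\Ric_t\|^2_{g(t)}\,d\vol_{g(t)}$. The only things requiring a little care are to read \emph{trivial} as the metric being stationary in $t$, and to draw the conclusion $\Ric_t\equiv 0$ for each $t\in J$ rather than at a single time.
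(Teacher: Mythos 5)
Your proposal is correct and follows essentially the same route as the paper: both integrate the evolution equation \eqref{GrindEQ__2_} over the closed manifold, use Stokes' theorem to annihilate the term $\int_M\Delta_{g(t)}\,s(t)\,d\vol_{g(t)}$, and combine the resulting sign $\int_M\frac{\partial}{\partial t}s(t)\,d\vol_{g(t)}=2\int_M\|\Ric_t\|^2_{g(t)}\,d\vol_{g(t)}\ge 0$ with hypothesis \eqref{GrindEQ__0_} to force $\Ric_t\equiv 0$. The only difference is cosmetic: the paper arranges the argument as a chain of inequalities ending in $\int_M\Delta_{g(t)}\,s(t)\,d\vol_{g(t)}\le -2\int_M\|\Ric_t\|^2_{g(t)}\,d\vol_{g(t)}\le 0$ before invoking Stokes, whereas you invoke Stokes first; the content is identical.
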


\begin{proof}
From \eqref{GrindEQ__2_} and \eqref{GrindEQ__0_} we obtain
\begin{equation}\label{GrindEQ__3_}
 \int_M\Delta_{g(t)}\,s(t)\,d\vol_{g(t)}\le \int_M\big[\Delta_{g(t)}\,s(t) -\frac{\partial}{\partial t}\,s(t)\big]\,d\vol_{g(t)}
 = -2\int_M\|\Ric_t\|^{2}_{g_0}\,d\vol_{g(t)} \le 0.
\end{equation}
Since $\int_M\Delta_{g(t)}\,s(t)\,d\vol_{g(t)}=0$ (by Stokes's theorem),
then from \eqref{GrindEQ__3_} we conclude that $\Ric_t\equiv 0$, in particular, $s(t)\equiv 0$. In this case,
the Hamilton's Ricci flow \eqref{GrindEQ__1_} is trivial.
\end{proof}

Self-similar solutions to Hamilton's Ricci flow \eqref{GrindEQ__1_} are called the Ricci solitons, they evolve only by diffeomorphisms and scaling
(see \cite[p.~38]{MT}). Namely, consider a one-parameter family of diffeomorphisms
$\varphi:(t,x)\in\RR\times M\to\varphi_{t}(x)\in M$ that is generated by some smooth vector field $\xi$ on $M$.
The evolutive metric
\[
 g(t)=\sigma(t)\,\varphi_{t}^{*}(x)\,g(0)
\]
for a positive scalar $\sigma(t)$ such that $\sigma(0)=1$
and $g(0)=g_{0}$ is a {Ricci soliton} if and only if the metric $g_0$ is a solution of the nonlinear stationary PDE
\begin{equation} \label{GrindEQ__8_}
 -\Ric_0=\frac12\,{\cal L}_{\xi}\,g_0 +\lambda\,g_0 ,
\end{equation}
where $\Ric_0$ is the Ricci tensor of $g_0$, ${\cal L}_{\xi}\,g_0$ is the \textit{Lie derivative} of $g_0$ with respect to $\xi$ and $\lambda\in\RR$ (e.g., \cite[p.~38]{MT}). 
We denote the Ricci soliton in the following way: $(M, g_0,\,\xi,\,\lambda)$.
A~Ricci soliton $(M, g_0, \xi,\lambda)$ is called \textit{steady}, \textit{shrinking} and \textit{expanding}
if $\lambda=0,\ \lambda <0$ and $\lambda>0$, respectively.
In~addition, $(M, g_0,\,\xi{\rm},\,\lambda)$ is called \textit{Einstein} if ${\cal L}_{\xi}\,g_0=0$, and it is called \textit{trivial} if~$\xi\equiv0$.

\begin{remark}\rm
Any Ricci soliton $(M,g_0,\xi,\lambda)$ for a closed manifold $M$ is a fixed point of the Hamilton's Ricci flow \eqref{GrindEQ__1_} projected from the space of Riemannian metrics onto its quotient modulo diffeomorphisms and scalings,
and often arises as blow-up limits for the Hamilton's Ricci flow on a closed manifold.
\end{remark}

A \textit{gradient Ricci soliton} is a smooth
complete Riemannian manifold $(M,g_0)$ such that there exists a function $f\in C^\infty(M)$, sometimes called a \textit{potential function}, satisfying the condition $\xi=\nabla^{g_0} f=(df)^\sharp$, where $\sharp$ corresponds to raising indices of a tensor, therefore (see~\cite{11})
\begin{equation}\label{E-09}
 -\Ric_0 = {\rm Hess}_f +\lambda\,g_0 .
\end{equation}
In this case, we will use the notation $(M,g_0,f,\lambda)$.
By means of the Perel'man work \cite{P}, we have the following theorem (see also \cite[Theorem~3.1]{11}):
\textit{Every Ricci soliton $(M,g_0,\xi,\lambda)$ for a closed manifold $M$ is a gradient Ricci soliton}.

\begin{theorem}\label{T-04} Let $(M, g_0, f,\lambda)$ be a gradient Ricci soliton on a closed manifold $M$ with the following condition on the scalar curvature $s_0$ in the direction of a vector field $\xi=\nabla^{g_0} f$:
\begin{equation}\label{E-grad-s-f}
 \int_M \xi(s_0)\,d\vol_{g_0} \le 0 .
\end{equation}
Then $(M, g_0, f,\lambda)$ is an Einstein soliton.
\end{theorem}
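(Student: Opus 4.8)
The plan is to turn the integral hypothesis \eqref{E-grad-s-f} into a rigidity statement. I will show that the quantity $\int_M \xi(s_0)\,d\vol_{g_0}$ is in fact \emph{always} nonnegative on a closed gradient Ricci soliton, so that the assumption $\int_M \xi(s_0)\,d\vol_{g_0}\le 0$ can hold only in the borderline equality case; that case will pin down $s_0$ as a constant. Once $s_0$ is constant I will deduce that the potential $f$ is harmonic, hence constant, which gives ${\rm Hess}_f\equiv 0$ and therefore ${\cal L}_{\xi}g_0=2\,{\rm Hess}_f\equiv 0$, i.e.\ the soliton is Einstein. The only inputs are the soliton equation \eqref{E-09}, integration by parts on the closed manifold, and the Cauchy--Schwarz inequality.

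First I would take the $g_0$-trace of \eqref{E-09}. Writing $n=\dim M$ and using $\tr_{g_0}{\rm Hess}_f=\Delta_{g_0}f$ and $\tr_{g_0}g_0=n$, this gives $\Delta_{g_0}f=-s_0-n\lambda$. Integrating over $M$ and using $\int_M\Delta_{g_0}f\,d\vol_{g_0}=0$ (Stokes) yields the normalization $\int_M s_0\,d\vol_{g_0}=-n\lambda\,\vol(M)$. Next I rewrite the hypothesis by integration by parts: since $\xi(s_0)=\langle\nabla^{g_0}f,\nabla^{g_0}s_0\rangle$, I have $\int_M\xi(s_0)\,d\vol_{g_0}=-\int_M s_0\,\Delta_{g_0}f\,d\vol_{g_0}$. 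Substituting $\Delta_{g_0}f=-s_0-n\lambda$ and then the normalization above produces
\[
 \int_M\xi(s_0)\,d\vol_{g_0}=\int_M s_0^{2}\,d\vol_{g_0}-n^{2}\lambda^{2}\,\vol(M).
\]

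Then I would apply Cauchy--Schwarz in the form $\big(\int_M s_0\,d\vol_{g_0}\big)^{2}\le \vol(M)\int_M s_0^{2}\,d\vol_{g_0}$, which combined with $\int_M s_0\,d\vol_{g_0}=-n\lambda\,\vol(M)$ shows $\int_M s_0^{2}\,d\vol_{g_0}\ge n^{2}\lambda^{2}\,\vol(M)$. Hence $\int_M\xi(s_0)\,d\vol_{g_0}\ge 0$ for \emph{every} closed gradient Ricci soliton. Comparing this with the assumption \eqref{E-grad-s-f} forces $\int_M\xi(s_0)\,d\vol_{g_0}=0$, and equality in Cauchy--Schwarz holds precisely when $s_0$ is constant (the smooth function $s_0$ being proportional to the constant $1$).

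Finally, with $s_0$ constant the relation $\Delta_{g_0}f=-s_0-n\lambda$ shows that $\Delta_{g_0}f$ is a constant; integrating once more over $M$ forces that constant to vanish, so $f$ is harmonic on the closed manifold and hence constant. Consequently ${\rm Hess}_f\equiv 0$, so ${\cal L}_{\xi}g_0=2\,{\rm Hess}_f\equiv 0$ and the soliton is Einstein (indeed trivial, $\xi\equiv 0$). I expect the main obstacle to be conceptual rather than computational: the key point is to recognize that \eqref{E-grad-s-f} is not an open condition but an equality-forcing one, through the universal inequality $\int_M\xi(s_0)\,d\vol_{g_0}\ge 0$. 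The passage from constant $s_0$ to constant $f$ is the remaining delicate step, but on a closed manifold it reduces to the standard fact that a function whose Laplacian is constant must have that constant equal to zero, and is therefore harmonic and hence constant.
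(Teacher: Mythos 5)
Your proof is correct and is in substance the same as the paper's: both trace the soliton equation, integrate by parts on the closed manifold, and show that $\int_M\xi(s_0)\,d\vol_{g_0}$ is automatically nonnegative on any closed gradient Ricci soliton, so that hypothesis \eqref{E-grad-s-f} forces the rigid equality case and hence $f={\rm const}$. The only difference is cosmetic: the paper writes the integral directly as $\int_M(\bar\Delta f)^2\,d\vol_{g_0}$, a manifest square, and concludes $\bar\Delta f\equiv 0$ at once, whereas you establish nonnegativity of the very same quantity (note $\int_M s_0^2\,d\vol_{g_0}-n^2\lambda^2\vol(M)=\int_M(\Delta_{g_0}f)^2\,d\vol_{g_0}$) via Cauchy--Schwarz and then pass through constancy of $s_0$ before reaching $\Delta_{g_0}f\equiv 0$ --- an equivalent, if slightly more roundabout, finish.
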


\begin{proof}
From \eqref{E-09} we obtain $\bar\Delta f=s_0+n\,\lambda$ for the connection Laplacian $\bar\Delta=-\tr_{g_0}(\nabla^{g_0})^2$.
Since $\xi(s_0)=g_0(\nabla^{g_0} f, \nabla^{g_0} s_0)$, the following equalities hold:
\begin{equation}\label{Eq-nabla-fs}
 \int_M \xi(s_0)\,d\vol_{g_0} = -\int_M s_0\bar\Delta f\,d\vol_{g_0}
 =\int_M(\bar\Delta f - n\,\lambda)\bar\Delta f\,d\vol_{g_0} = \int_M(\bar\Delta f)^2\,d\vol_{g_0} \ge0.
\end{equation}
Then from \eqref{E-grad-s-f} and \eqref{Eq-nabla-fs} we obtain $\bar\Delta f=0$.
In this case, $f={\rm const}$ by the Bochner maximum principal (see \cite[p.~30--31]{12}).
Then, from \eqref{E-09} we conclude that $\Ric_0=-\lambda\,g_0=(s_0/n)\,g_0$,
i.e., $(M,g_0)$ is an Einstein manifold.
\end{proof}

The length $\|\nabla f\|$ of the gradient of a scalar function $f$ on $M$ characterizes the rate of change of $f$.
In our case, the scalar curvature $s_0$ is ``decreasing in average" in the direction of $\xi=\nabla f$.

\begin{remark}\rm
In \cite{C} it was proved that any closed steady or expanding Ricci soliton is Einstein. Therefore, Theorem~\ref{T-04} concerns only a shrinking Ricci soliton. Moreover, Theorem~\ref{T-04} can be considered as a consequence of Lemma~\ref{T-01}.
Note that the condition \eqref{E-grad-s-f} in Theorem~\ref{T-04} can be replaced by the more strict condition $s_0 = {\rm const}$. Therefore, a shrinking Ricci soliton with constant scalar curvature on a closed manifold is an Einstein soliton.
\end{remark}

Besides studying the Hamilton's Ricci flow on a closed manifold, we shall consider the flow \eqref{GrindEQ__1_} on an open (i.e., noncompact complete) manifold.
Namely, a solution $g(t),\ t\in J\subset[0,\infty)$, of
\eqref{GrindEQ__1_} will be called \textit{complete} if for each $t\in J$ the Riemannian metric $g(t)$ is complete (see \cite[p.~102]{1}). On the other hand, a solution $g(t),\ t\in J$, of
\eqref{GrindEQ__1_} will be called \textit{stochastically complete} if for each $t\in J$ the metric $g(t)$ is stochastically complete.
Let us dwell in more detail on the concept on stochastically complete Riemannian manifolds (see \cite{G1,G2,G3}).

Let $(M, g(t))$ be a connected, open Riemannian manifold for some $t\in J\subset[0,\infty)$.
Consider a diffusion process on $(M, g(t))$, generated by the Laplace-Beltrami operator $\Delta_{g(t)}$, which is absorbing at infinity. If the probability of the absorption at $\infty $ in a finite time is zero, then $(M,g(t))$ is called \textit{stochastically complete}.
The main result of \cite{G2} is the following:
\textit{Let $(M,g(t))$ be geodesically complete and
 $\int_{0}^{\infty}\big({R/\ln V_{R}(t)}\big)\,dR=\infty$,
where $V_{R}(t)$ is the $g(t)$-volume of the geodesic sphere of radius $R$ and fixed center. Then $(M,g(t))$ is stochastically complete}.

\begin{remark}\rm
There are other criteria for the stochastic completeness of a manifold (e.g., \cite{PRS}).
We~refer the reader to the excellent review \cite{G3} for an exhaustive exposition of the theory of stochastically complete manifolds.
\end{remark}

\begin{theorem}
Let $M$ be a smooth manifold with the Hamilton's Ricci flow \eqref{GrindEQ__1_} for a family $g(t)$ of stochasti\-cally complete Riemannian metrics defined on a time interval $J\subset[0,\infty)$.
If for each $t\in J$, the scalar curvature $s(t)$ is a nonnegative function from $L^{1}(M, g(t))$ and
\begin{equation}\label{Eq-3b}
 \frac{\partial}{\partial t}\,s(t)\le 0,\quad t\in J,
\end{equation}
holds then the flow is trivial.
\end{theorem}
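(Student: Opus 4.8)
\emph{The plan is to} reduce the statement to a Liouville-type property for nonnegative $L^1$ superharmonic functions on a stochastically complete manifold, and then to read off the vanishing of $\Ric_t$ from the evolution equation \eqref{GrindEQ__2_}. Fix $t\in J$ and regard $s(t)$ as a function on the (fixed) Riemannian manifold $(M,g(t))$. Combining \eqref{GrindEQ__2_} with the hypothesis \eqref{Eq-3b} gives the pointwise inequality
\[
 \Delta_{g(t)}\,s(t) = \frac{\partial}{\partial t}\,s(t) - 2\,\|\Ric_t\|^2_{g(t)} \le -2\,\|\Ric_t\|^2_{g(t)} \le 0 ,
\]
so $s(t)$ is a \emph{nonnegative, superharmonic} function lying in $L^{1}(M,g(t))$. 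The whole difficulty is thus concentrated in showing that such a function must in fact be \emph{harmonic}; once $\Delta_{g(t)}\,s(t)\equiv0$ is known, the displayed inequality forces $\|\Ric_t\|^2_{g(t)}\equiv0$, i.e.\ $\Ric_t\equiv0$, whence $\frac{\partial}{\partial t}\,g(t)=-2\,\Ric_t\equiv0$ and the flow is trivial.

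To prove harmonicity I would exploit stochastic completeness through the conservativeness of the heat semigroup $P_\tau=e^{\tau\Delta_{g(t)}}$ of $(M,g(t))$. Stochastic completeness is equivalent to $\int_M p_\tau(x,y)\,d\vol_{g(t)}(y)=1$ for the heat kernel $p_\tau$, for all $x$ and all $\tau>0$. Using the symmetry $p_\tau(x,y)=p_\tau(y,x)$ together with Tonelli's theorem for the nonnegative function $s(t)$, this conservation identity yields the mass-preservation property
\[
 \int_M P_\tau\,s(t)\,d\vol_{g(t)} = \int_M s(t)\,d\vol_{g(t)},\qquad \tau>0 .
\]
On the other hand, superharmonicity of $s(t)$ gives $\frac{d}{d\tau}\,P_\tau\,s(t)=P_\tau\big(\Delta_{g(t)}\,s(t)\big)\le0$, so $P_\tau\,s(t)$ is nonincreasing in $\tau$ and hence $P_\tau\,s(t)\le P_0\,s(t)=s(t)$.

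Now I would put the two facts together. The integrand $s(t)-P_\tau\,s(t)$ is nonnegative, yet by the mass-preservation identity its integral over $M$ vanishes for every $\tau>0$; therefore $P_\tau\,s(t)=s(t)$ for all $\tau$. Differentiating at $\tau=0$ gives $\Delta_{g(t)}\,s(t)\equiv0$, which by the first paragraph completes the argument. Equivalently, one may simply invoke an $L^{1}$-Liouville theorem for superharmonic functions on stochastically complete manifolds from \cite{G1,G2,G3}.

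\emph{The main obstacle} I anticipate is analytic rather than geometric: for a merely integrable (and possibly unbounded) superharmonic $s(t)$ one must justify that $P_\tau\,s(t)$ is well defined and finite, that the semigroup may be differentiated in $\tau$ with $\partial_\tau P_\tau=\Delta_{g(t)}P_\tau=P_\tau\Delta_{g(t)}$, and that the interchange of integration in the mass-preservation step is legitimate. This is precisely the point at which stochastic completeness --- rather than mere geodesic completeness --- is indispensable, and where the hypotheses $s(t)\ge0$ and $s(t)\in L^{1}$ are used in full. The algebraic passage from $\Delta_{g(t)}\,s(t)\equiv0$ to $\Ric_t\equiv0$ via \eqref{GrindEQ__2_} is then immediate.
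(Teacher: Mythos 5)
Your proposal is correct and takes essentially the same route as the paper: both arguments use \eqref{GrindEQ__2_} together with \eqref{Eq-3b} to show that $s(t)$ is a nonnegative superharmonic function in $L^{1}(M,g(t))$, then invoke stochastic completeness to conclude it is harmonic (the paper cites Grigor'yan's $L^1$-Liouville theorem \cite{G2}, \cite[p.~234]{G1} to get constancy), and finally read off $\Ric_t\equiv 0$ and the triviality of the flow. The only difference is that you unpack the heat-semigroup proof of that Liouville property rather than citing it --- an alternative you yourself note is equivalent --- so the core of the argument coincides with the paper's.
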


\begin{proof} By \eqref{GrindEQ__3_}, $s(t)$ is a \textit{superharmonic function} (see \cite[p.~233]{G1}).
In turn, if $(M, g(t)),\ t\in J$, is stochastically complete then any nonnegative superharmonic function
$u(t)\in L^{1}(M, g(t))$ is a constant (see \cite{G2}; \cite[p. 234]{G1}). In particular, the nonnegative scalar function $s(t)\in L^{1}(M, g(t))$ is constant, thus $\Delta_{g(t)}\,s(t)\equiv0$, and from \eqref{GrindEQ__3_} obtain $\Ric_t\equiv 0$ for $t\in J$. In this case, the flow \eqref{GrindEQ__1_} is trivial.
\end{proof}

In turn, S.\,T.~Yau proved that a complete Riemannian manifold with the Ricci curvature bounded below is stochastically complete (see \cite{Y}). Then we have the following

\begin{corollary}
Let $M$ be a smooth manifold with the Hamilton's Ricci flow \eqref{GrindEQ__1_} for a family $g(t)$ of complete Riemannian metrics defined on a time interval $J\subset[0,\infty)$.
If for each $t\in J$, the Ricci tensor $\Ric_t$ is bounded below, the scalar curvature $s(t)$ is a nonnegative function from $L^{1}(M,g(t))$ and \eqref{Eq-3b} holds, then the flow is trivial.
\end{corollary}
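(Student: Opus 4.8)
The plan is to reduce the Corollary to the preceding Theorem by showing that its hypothesis of stochastic completeness holds automatically under the present assumptions. The two statements differ only in that the Corollary replaces ``stochastically complete'' by the pair of conditions ``complete'' together with ``$\Ric_t$ bounded below.'' So the entire task is to bridge these two notions.

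First I would invoke the result of S.\,T.~Yau cited immediately above the Corollary: a complete Riemannian manifold whose Ricci curvature is bounded below is stochastically complete. Applying this to each metric $g(t)$, $t\in J$ --- which is complete by hypothesis and satisfies a lower bound on $\Ric_t$ --- yields that $(M, g(t))$ is stochastically complete for every $t\in J$. This is the only genuine step, and it is a direct citation rather than a computation.

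With stochastic completeness in hand, every hypothesis of the preceding Theorem is now satisfied: for each $t\in J$ the metric $g(t)$ is stochastically complete, the scalar curvature $s(t)$ is a nonnegative function in $L^{1}(M, g(t))$, and the monotonicity condition \eqref{Eq-3b} holds. I would then simply apply that Theorem to conclude that the flow \eqref{GrindEQ__1_} is trivial.

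Since this is a pure corollary, there is essentially no obstacle beyond verifying that Yau's hypothesis matches the stated assumption and that stochastic completeness is exactly what the earlier Theorem consumes. The substantive content --- the superharmonicity of $s(t)$ coming from \eqref{GrindEQ__3_}, and the Liouville-type rigidity forcing any nonnegative $L^{1}$ superharmonic function on a stochastically complete manifold to be constant, whence $\Delta_{g(t)} s(t)\equiv 0$ and $\Ric_t\equiv 0$ --- has already been established in the proof of that Theorem and is merely imported here through the bridge provided by Yau's theorem.
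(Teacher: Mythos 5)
Your proposal is correct and follows exactly the route the paper intends: the Corollary is stated immediately after the citation of Yau's theorem (complete plus Ricci curvature bounded below implies stochastically complete), and the paper's implicit proof is precisely this reduction to the preceding Theorem. Your write-up makes that implicit bridge explicit, with no gaps.
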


Recall that the \textit{kinetic energy} ${\cal E}(\xi)$ of the flow on a Riemannian manifold $(M,g)$ gene\-rated by a vector field $\xi$ is given, in accordance with \cite[p. 2]{AK}, by
\begin{equation*}
 {\cal E}(\xi)= \frac12\int_M \|\xi\|^2\,d\vol_g,
\end{equation*}
where $\|\xi\|^2=g(\xi,\xi)$. The energy ${\cal E}(\xi)$ can be infinite or finite. For example, ${\cal E}(\xi)<\infty$ for a smooth complete vector field $\xi$ on a closed manifold $(M,g)$.
Recall that a vector field $\xi$ on a Riemannian manifold $(M, g)$ is an \textit{infinitesimal harmonic transformation} if the flow of $\xi$ consists of harmonic diffeomorphisms of $(M, g)$ (see~\cite{10}).
The Laplace-Beltrami operator $\Delta(\|\xi\|^2)$ for an infinitesimal harmonic transformation $\xi$ has the form (see \cite{4,21,22,SM})
\begin{equation}\label{Eq-011}
 (1/2)\Delta(\|\xi\|^2) = \|\nabla\xi\|^2 -\Ric(\xi,\xi).
\end{equation}
In particular, for an arbitrary Ricci soliton $(M, g,\,\xi,\,\lambda)$ its vector field $\xi$ is an infinitesimal harmonic transformation (see~\cite{3}). Therefore, the equation \eqref{Eq-011} is also valid for the vector field $\xi$ of a Ricci soliton  $(M, g,\,\xi,\,\lambda)$.
We use the second Kato inequality (see \cite[p.~380]{B})
\begin{equation}\label{Eq-012}
 -\|\xi\|\,\Delta(\|\xi\|) \le g(\bar\Delta\theta,\theta),
\end{equation}
where $\theta$ is the 1-form $g$-dual to $\xi$. Recall that the Yano Laplacian has the form (see \cite[p. 40]{YK})
\[
 \square\,\theta = \bar\Delta\,\theta -\Ric(\xi,\,\cdot),
\]
therefore, $\bar\Delta\theta=\Ric(\xi,\,\cdot)$ for the vector field $\xi$ of a Ricci soliton $(M, g,\,\xi,\,\lambda)$.
Then, from \eqref{Eq-012} we get the inequality
\begin{equation}\label{Eq-013}
 \|\xi\|\,\Delta(\|\xi\|) \ge -\Ric(\xi,\xi).
\end{equation}

\begin{theorem}
Let $(M, g_0, \xi, \lambda)$ be a complete Ricci soliton with the Ricci curvature $\Ric_0(\xi,\xi)\le0$ and finite kinetic energy ${\cal E}(\xi)$,
then $(M, g_0)$ is Einstein.
In addition,

$\bullet$\
if $(M, g_0,\,\xi,\,\lambda)$ is shrinking or expanding then it is trivial.

$\bullet$\
if $(M, g_0,\,\xi,\,\lambda)$ is steady and ${\rm Vol}(M,g_0)=\infty$ then it is~trivial.
\end{theorem}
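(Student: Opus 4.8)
The plan is to treat the norm $\|\xi\|$ of the soliton field as a nonnegative subharmonic function in $L^2(M,g_0)$ and to invoke a Yau-type Liouville theorem. First I would note that the finite kinetic energy hypothesis means precisely
\begin{equation*}
 \int_M \|\xi\|^2\,d\vol_{g_0} = 2\,{\cal E}(\xi) < \infty ,
\end{equation*}
so $\|\xi\|\in L^2(M,g_0)$. Since the soliton field $\xi$ is an infinitesimal harmonic transformation, inequality \eqref{Eq-013} applies, and the hypothesis $\Ric_0(\xi,\xi)\le 0$ turns its right-hand side nonnegative, giving $\|\xi\|\,\Delta(\|\xi\|)\ge -\Ric_0(\xi,\xi)\ge 0$. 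On the open set $\{\xi\ne 0\}$ this yields $\Delta(\|\xi\|)\ge 0$; the continuous function $\|\xi\|$ is in fact subharmonic in the distributional sense across the zero set of $\xi$ as well, by a standard Kato-type regularization.

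Next I would apply Yau's $L^p$-Liouville theorem with $p=2$ (see \cite{Y}): on a complete manifold a nonnegative subharmonic function lying in $L^p$ for some $p>1$ must be constant. Hence $\|\xi\|\equiv c$ for some constant $c\ge 0$, so $\Delta(\|\xi\|^2)=0$. Substituting this into the Bochner-type formula \eqref{Eq-011} gives
\begin{equation*}
 0 = \|\nabla\xi\|^2 - \Ric_0(\xi,\xi),
\end{equation*}
and since $\|\nabla\xi\|^2\ge 0$ while $\Ric_0(\xi,\xi)\le 0$, both terms must vanish: $\nabla\xi\equiv 0$ and $\Ric_0(\xi,\xi)\equiv 0$. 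A parallel field satisfies ${\cal L}_{\xi}\,g_0=0$, so the soliton equation \eqref{GrindEQ__8_} reduces to $-\Ric_0=\lambda\,g_0$; thus $(M,g_0)$ is Einstein (and the soliton is Einstein in the sense defined above).

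Finally I would read off the triviality statements. If the soliton is shrinking or expanding, then $\lambda\ne 0$, and $\Ric_0(\xi,\xi)=-\lambda\,\|\xi\|^2=-\lambda\,c^2=0$ forces $c=0$, i.e., $\xi\equiv 0$. If the soliton is steady with ${\rm Vol}(M,g_0)=\infty$, then ${\cal E}(\xi)=\tfrac12\,c^2\,{\rm Vol}(M,g_0)<\infty$ again forces $c=0$, so $\xi\equiv 0$. I expect the only genuine obstacle to lie in the first paragraph: establishing that $\|\xi\|$ is globally subharmonic despite the possible zeros of $\xi$, where $\|\xi\|$ is merely Lipschitz, and pinning down the exact hypotheses of the $L^p$-Liouville theorem on a complete rather than compact manifold. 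The remaining algebra is routine.
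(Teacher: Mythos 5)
Your proposal is correct and follows essentially the same route as the paper's own proof: inequality \eqref{Eq-013} combined with $\Ric_0(\xi,\xi)\le 0$, a Yau-type Liouville theorem applied to $\|\xi\|\in L^2(M,g_0)$ to get $\|\xi\|$ constant, then \eqref{Eq-011} to force $\nabla\xi=0$ and $\Ric_0(\xi,\xi)=0$, the soliton equation \eqref{GrindEQ__8_} to conclude Einstein, and the identical case analysis ($\lambda\ne 0$, respectively $\lambda=0$ with infinite volume) for triviality. The technical point you flag about subharmonicity across the zero set of $\xi$ is sidestepped in the paper by citing Theorem~3 of \cite{18}, which applies directly to functions satisfying $u\,\Delta u\ge 0$ with $u\in L^p$, $p>1$, exactly the form delivered by \eqref{Eq-013}.
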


\begin{proof}
Using \cite[Theorem~3]{18}, we find from \eqref{Eq-013} that if $\Ric_0(\xi,\xi)\le0$ and ${\cal E}(\xi)<\infty$ then $\|\xi\|_{g_0}=C$ for some constant $C\ge0$, and hence $\Ric_0(\xi,\xi)=0$. In this case, from \eqref{Eq-011} we obtain $\nabla\xi=0$. Therefore, we can conclude from \eqref{GrindEQ__8_} that $\Ric_0=-\lambda\,g_0$,
hence the Ricci soliton $(M, g_0,\,\xi,\,\lambda)$ is Einstein. In this case, $0=\Ric_0(\xi,\xi)=-\lambda\,\|\xi\|^2_{g_0}$.
Therefore, if $\lambda\ne0$ then $\xi=0$, and thus the Ricci soliton $(M, g_0,\,\xi,\,\lambda)$ is trivial.
At the same time, if $\lambda=0$ and ${\rm Vol}(M,g_0)=\infty$ then we get the equality
\[
 {\cal E}(\xi)
 = \frac12\,C^2\int_M d\vol_{g_0} =\infty ,
\]
which contradicts the assumption ${\cal E}(\xi)<\infty$. In this case, $C=0$; hence, the Ricci soliton
$(M, g_0,\,\xi,\,\lambda)$ is trivial.
\end{proof}

\section{Evolution of the Ricci tensor under the Ricci~flow}

 The evolution of the Ricci tensor under the flow \eqref{GrindEQ__1_}  has the form (see \cite[p. 112]{1})
\begin{equation}\label{GrindEQ__4_}
 \frac{\partial}{\partial t} \Ric_t=\Delta_{g(t)}^{S}\Ric_t,
\end{equation}
where $\Delta_{g(t)}^{S} : C^{\infty}(S^{2} M)\to C^{\infty}(S^{2} M)$ is the \textit{Sampson Laplacian} (see \cite{10,SM}).
The Laplacian
 $\Delta_{g(t)}^{S}=\delta\,\delta^*-\delta^*\,\delta$
acts on the vector bundle $S^pM$ of covariant symmetric $p$-tensors for $p\ge1$, where $\delta^*=(p+1){\rm Sym}\nabla^{g(t)}$
with the ordinary operator of symmetrization ${\rm Sym}:\bigotimes^p\,T^*M\to S^pM$, and $\delta:C^\infty(S^{p+1}M)\to C^\infty(S^pM)$ is the divergence operator (see \cite[p.~55; 356]{Besse}). Moreover,
$\Delta^S_{g(t)}$ admits the Weitzenb\"{o}ck decomposition
\[
 \Delta_{g(t)}^{S}=\bar\Delta_{g(t)}-\Re_p,
\]
where $\Re_p$ is the Weitzenb\"{o}ck curvature operator of the Lichnerowicz Laplacian $\Delta^L_{g(t)}=\bar\Delta_{g(t)}+\Re_p$ restricted to covariant symmetric $p$-tensors.
For example, if $\varphi_{ij}=\varphi(e_i,e_j)$ are local components of an arbitrary $\varphi\in C^\infty(S^2M)$ with respect to an orthonormal basis $\{e_1,\ldots,e_n\}$ of $T_xM$ at a point $x\in M$, then
\[
 \Re_2(\varphi)=-2\,\varphi_{kl}R_{kijl}(t)+\varphi_{kj}R_{ki}(t)+\varphi_{ki}R_{kj}(t)
\]
for the local components $R_{kijl}(t)$ and $R_{ki}(t)$ of the curvature tensor $Rm(t)$ and the Ricci tensor $\Ric_t$.
In~addition, the identity $\tr_{g(t)}(\Delta_{g(t)}^{S}\,\varphi)=\Delta_{g(t)}^{S}(\tr_{g(t)}\varphi)$ holds for any $\varphi\in C^\infty(S^pM)$.

\begin{lemma}\label{T-03}
Let $M$ be a closed smooth manifold with the Hamilton's Ricci flow \eqref{GrindEQ__1_} for a family $g(t)$ of Riemannian metrics, defined on a time interval $J\subset[0,\infty)$. If~the inequality
\begin{subequations}
\begin{equation}\label{Eq-02a}
 \tr_{g(t)}\Big(\frac{\partial}{\partial t}\,\Ric_t\Big) \ge 0
\end{equation}
or
\begin{equation}\label{Eq-02b}
 \tr_{g(t)}\Big(\frac{\partial}{\partial t}\,\Ric_t\Big) \le 0,
\end{equation}
is satisfied for each $t\in J$, then the flow is trivial.
\end{subequations}
\end{lemma}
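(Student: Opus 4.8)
The plan is to mirror the proof of Lemma~\ref{T-01}, transporting the argument from the scalar evolution \eqref{GrindEQ__2_} to the tensorial one \eqref{GrindEQ__4_} by taking a trace. First I would apply $\tr_{g(t)}$ to \eqref{GrindEQ__4_} and use the stated commutation rule $\tr_{g(t)}(\Delta^S_{g(t)}\varphi)=\Delta^S_{g(t)}(\tr_{g(t)}\varphi)$ with $\varphi=\Ric_t$, which converts the hypothesis into a sign condition on $\Delta^S_{g(t)}(s(t))$, where $s(t)=\tr_{g(t)}\Ric_t$. Since the Weitzenb\"ock curvature term vanishes on functions, $\Delta^S_{g(t)}$ reduces on the scalar $s(t)$ to the Laplace--Beltrami operator up to sign. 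I would in fact prefer the self-contained route: differentiating $s(t)=g^{ij}(t)R_{ij}(t)$ in time and using the flow \eqref{GrindEQ__1_} gives $\partial_t s(t)=2\,\|\Ric_t\|^2_{g(t)}+\tr_{g(t)}(\partial_t\Ric_t)$, so comparison with \eqref{GrindEQ__2_} yields the clean pointwise identity $\tr_{g(t)}(\partial_t\Ric_t)=\Delta_{g(t)}s(t)$, which fixes the sign unambiguously.

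Next I would integrate over the closed manifold. By Stokes' theorem $\int_M\Delta_{g(t)}s(t)\,d\vol_{g(t)}=0$, exactly as in \eqref{GrindEQ__3_}. Hence under \eqref{Eq-02a} the integrand $\tr_{g(t)}(\partial_t\Ric_t)=\Delta_{g(t)}s(t)\ge 0$ integrates to zero and therefore vanishes identically, and the same conclusion follows under \eqref{Eq-02b} with the reversed inequality. Thus $\Delta_{g(t)}s(t)\equiv 0$ for every $t\in J$, so at each time $s(t)$ is a harmonic function on a closed manifold and is therefore constant on $M$, while $\tr_{g(t)}(\partial_t\Ric_t)\equiv 0$.

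The remaining step---upgrading this to triviality of the flow, i.e. $\Ric_t\equiv 0$---is the one I expect to be the main obstacle, and it is precisely where the analogy with Lemma~\ref{T-01} must be forced through. Feeding $\Delta_{g(t)}s(t)\equiv 0$ back into \eqref{GrindEQ__2_} only gives $\partial_t s(t)=2\,\|\Ric_t\|^2_{g(t)}$, which controls the time derivative of the (now spatially constant) scalar curvature but does not by itself make $\|\Ric_t\|$ vanish. In Lemma~\ref{T-01} the conclusion came from the chain \eqref{GrindEQ__3_}, which required $\int_M\partial_t s(t)\,d\vol_{g(t)}\le 0$; here the trace hypothesis instead produces $\int_M\partial_t s(t)\,d\vol_{g(t)}=2\int_M\|\Ric_t\|^2_{g(t)}\,d\vol_{g(t)}\ge 0$, the opposite sign, so Lemma~\ref{T-01} does not apply verbatim. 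That this gap is genuine is shown by the homothetically evolving Einstein metrics: for them $\Ric_t$ is time-independent as a $(0,2)$-tensor, whence $\tr_{g(t)}(\partial_t\Ric_t)\equiv 0$ satisfies both \eqref{Eq-02a} and \eqref{Eq-02b} with equality, yet the flow is nontrivial. Closing the argument would therefore require either an additional assumption excluding such shrinking Einstein evolutions, or a sharper use of the full tensor equation \eqref{GrindEQ__4_}---for instance pairing it with $\Ric_t$ and exploiting the $\delta\delta^{*}-\delta^{*}\delta$ structure of $\Delta^S_{g(t)}$ to try to recover $\int_M\|\Ric_t\|^2_{g(t)}\,d\vol_{g(t)}=0$ rather than merely the vanishing of its trace; the indefiniteness of that structure is what makes this delicate.
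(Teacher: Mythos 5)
Your proposal follows exactly the paper's own route, and it stops exactly where the paper's proof breaks down. The paper's proof consists of the identity \eqref{GrindEQ__5_}, $\tr_{g(t)}(\partial_t\Ric_t)=\bar\Delta_{g(t)}\,s(t)$, obtained from the trace-commutation property of the Sampson Laplacian (your derivation, differentiating $s=g^{ij}R_{ij}$ in $t$ and comparing with \eqref{GrindEQ__2_}, gives the same identity with the Laplace--Beltrami sign, which is in fact the correct one; the discrepancy is immaterial since both signs \eqref{Eq-02a}, \eqref{Eq-02b} are hypothesized), followed by the Bochner maximum principle to conclude that $s(t)$ is constant on $M$ for each $t$ --- equivalent to your integration/Stokes argument on a closed manifold --- and then the single sentence ``In this case, the flow \eqref{GrindEQ__1_} is trivial,'' offered without justification.

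That unsupported sentence is precisely the step you identified as the obstruction, and your counterexample shows it cannot be repaired: for a closed Einstein manifold with $\Ric_0=\kappa\,g_0$, $\kappa\neq 0$ (e.g.\ the round sphere), the solution $g(t)=(1-2\kappa t)\,g_0$ of \eqref{GrindEQ__1_} has $\Ric_t=\kappa\,g_0$ independent of $t$, so $\tr_{g(t)}(\partial_t\Ric_t)\equiv 0$ satisfies both \eqref{Eq-02a} and \eqref{Eq-02b}, yet the flow is not trivial in the sense used in Lemma~\ref{T-01}, where triviality means $\Ric_t\equiv 0$, i.e.\ a stationary metric. The hypotheses yield only that $s(t)$, and then by \eqref{GrindEQ__2_} also $\|\Ric_t\|^2_{g(t)}$, is spatially constant for each $t$; they cannot force $\Ric_t\equiv 0$. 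So the gap you named is genuine, but it lies in the paper's own proof (and indeed in the statement of Lemma~\ref{T-03} itself, unless ``trivial'' is weakened so as to admit homothetic evolutions of Einstein metrics, in the spirit of the Einstein conclusion of Theorem~\ref{T-06}). Your analysis is correct and more careful than the paper's.
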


\begin{proof}
From \eqref{GrindEQ__4_} we obtain
\begin{equation}\label{GrindEQ__5_}
 \tr_{g(t)}\,\Big(\frac{\partial}{\partial t}\Ric_t\Big)=\bar\Delta_{g(t)}\, s(t),
\end{equation}
because $\tr_{g(t)}(\Delta_{g(t)}^{S}\,\varphi) = \bar\Delta_{g(t)}(\tr_{g(t)}\varphi)$ for any $\varphi\in C^{\infty}(S^{2} M)$.
If we suppose \eqref{Eq-02a} or \eqref{Eq-02b}
for each $t\in J$, then by \eqref{GrindEQ__5_} we obtain
$\bar\Delta_{g(t)}\,s(t)\ge 0$ or $\bar\Delta_{g(t)}\,s(t)\le 0$, respectively.
For a closed manifold $M$, by the \textit{Bochner maximum principle} (see \cite[p.~30--33]{5}; \cite[p.~39]{YK}) we find $s={\rm const}$.
In~this case, the flow \eqref{GrindEQ__1_} is trivial.
\end{proof}

Next, if we assume the inequality \eqref{Eq-02a} then from \eqref{GrindEQ__5_} we obtain $\bar\Delta_{g(t)}\,s(t)\ge 0$.
In this case, $\Delta_{g(t)}\,s(t)=-\bar\Delta_{g(t)}\,s(t)\le 0$ according to definitions of Bochner and Laplace-Beltrami Laplacians; therefore, $s(t)$ is a superharmonic function.

\begin{proposition}
Let $M$ be a smooth manifold with the Hamilton's Ricci flow \eqref{GrindEQ__1_} for a family $g(t)$ of stochastically complete Riemannian metrics defined on a time interval $J\subset[0,\infty)$. If~the inequality \eqref{Eq-02a} is satisfied for each $t\in J$ and the scalar curvature $s(t)$ is a nonnegative function from $L^{1}(M,g(t))$ then the flow is trivial.
\end{proposition}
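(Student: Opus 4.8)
The plan is to run the same superharmonicity-plus-stochastic-completeness mechanism as in the theorem preceding this statement, but feeding in the Ricci-tensor evolution \eqref{GrindEQ__4_} through its trace \eqref{GrindEQ__5_} rather than the scalar evolution \eqref{GrindEQ__2_}. First I would invoke \eqref{GrindEQ__5_}, namely $\tr_{g(t)}(\tfrac{\partial}{\partial t}\Ric_t)=\bar\Delta_{g(t)}\,s(t)$, so that hypothesis \eqref{Eq-02a} reads $\bar\Delta_{g(t)}\,s(t)\ge0$. By the relation $\Delta_{g(t)}=-\bar\Delta_{g(t)}$ between the Laplace--Beltrami and connection Laplacians this is $\Delta_{g(t)}\,s(t)\le0$, i.e. $s(t)$ is superharmonic for every $t\in J$, exactly as recorded in the paragraph preceding the statement.

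Next I would apply the result already used above (see \cite{G2}; \cite[p.~234]{G1}): on a stochastically complete manifold, any nonnegative superharmonic function lying in $L^{1}(M,g(t))$ is constant. Since by hypothesis $s(t)\ge0$ and $s(t)\in L^{1}(M,g(t))$, this forces $s(t)$ to be spatially constant for each fixed $t\in J$.

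The final step is to pass from spatial constancy of $s(t)$ to triviality, i.e. $\Ric_t\equiv0$. Here I would reinsert the scalar evolution \eqref{GrindEQ__2_}: spatial constancy gives $\Delta_{g(t)}\,s(t)=0$, whence \eqref{GrindEQ__2_} collapses to the pointwise identity $\frac{\partial}{\partial t}\,s(t)=2\,\|\Ric_t\|^{2}_{g(t)}\ge0$. Since the left-hand side depends only on $t$, this already makes $\|\Ric_t\|^{2}_{g(t)}$ spatially constant. To force it to vanish I would exploit the remaining hypotheses: the spatial constant $s(t)$ is nonnegative and integrable, so on the stochastically complete manifold it must vanish identically when the volume is infinite, and then $\frac{\partial}{\partial t}\,s(t)=0$ yields $\|\Ric_t\|^{2}_{g(t)}\equiv0$, i.e. $\Ric_t\equiv0$ and the flow is trivial.

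I expect this last step to be the main obstacle. Superharmonicity and spatial constancy of $s(t)$ follow routinely once \eqref{GrindEQ__5_} is available, but deducing $\Ric_t\equiv0$ from constancy of $s(t)$ is delicate. In the earlier theorem this used hypothesis \eqref{Eq-3b}, $\frac{\partial}{\partial t}\,s(t)\le0$, which together with $\frac{\partial}{\partial t}\,s(t)=2\,\|\Ric_t\|^{2}_{g(t)}\ge0$ immediately gave $\Ric_t\equiv0$; here \eqref{Eq-02a} controls $\tr_{g(t)}(\frac{\partial}{\partial t}\Ric_t)$ rather than $\frac{\partial}{\partial t}\,s(t)$, so that opposite sign input is not at hand and the vanishing must instead be wrung out of the integrability and nonnegativity of the spatial constant $s(t)$. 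Some hypothesis of this type is genuinely required, since a spatially constant positive scalar curvature is compatible with a nontrivial flow when the total volume is finite.
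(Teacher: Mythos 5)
Your first two steps are precisely the paper's route. The paper states this Proposition without a proof of its own: it is meant to follow from the paragraph immediately preceding it, where \eqref{Eq-02a} combined with \eqref{GrindEQ__5_} gives $\bar\Delta_{g(t)}s(t)\ge0$, i.e.\ $\Delta_{g(t)}s(t)\le0$, together with the argument of the stochastically complete Theorem of Section~2, namely Grigor'yan's result (\cite{G2}; \cite[p.~234]{G1}) that on a stochastically complete manifold a nonnegative superharmonic function in $L^{1}(M,g(t))$ is constant. Up to the spatial constancy of $s(t)$, your proposal and the paper's implicit proof agree verbatim.

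Where you stall is exactly where the paper has a genuine, unacknowledged gap, and your diagnosis is correct. In the earlier Theorem, hypothesis \eqref{Eq-3b} did double duty: it gave superharmonicity, and then, once constancy forced $\Delta_{g(t)}s(t)=0$, equation \eqref{GrindEQ__2_} collapsed to $\frac{\partial}{\partial t}s(t)=2\,\|\Ric_t\|^{2}_{g(t)}\ge0$, which against \eqref{Eq-3b} yields $\Ric_t\equiv0$. Hypothesis \eqref{Eq-02a} admits no such second use: after constancy one only knows that $2\,\|\Ric_t\|^{2}_{g(t)}=\frac{\partial}{\partial t}s(t)$ is a nonnegative spatial constant, and nothing in the hypotheses forces it to vanish. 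The paper simply asserts triviality at this point, exactly as in the closing sentence of the proof of Lemma~\ref{T-03}, and that assertion is a non sequitur: the shrinking round sphere satisfies $\frac{\partial}{\partial t}\Ric_t=0$ (so \eqref{Eq-02a} and \eqref{Eq-02b} both hold), has spatially constant $s(t)>0$ lying in $L^{1}$, and is stochastically complete (every closed manifold is), yet its flow is nontrivial; so constancy of $s(t)$ alone can never suffice. Your proposed repair --- infinite volume forces the nonnegative constant $s(t)$ to be zero, whence $\frac{\partial}{\partial t}s(t)=0$ and \eqref{GrindEQ__2_} gives $\Ric_t\equiv0$ --- is sound, but ${\rm Vol}(M,g(t))=\infty$ is neither assumed nor implied by the hypotheses: a geodesically complete open manifold of finite volume is stochastically complete by Grigor'yan's volume criterion, so the dangerous case is not excluded. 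In short, you have proved the Proposition under an additional infinite-volume hypothesis, and you have correctly located the step at which the paper's own argument, which includes no such hypothesis, breaks down. A minor further remark: tracing \eqref{GrindEQ__4_} consistently with \eqref{GrindEQ__2_} actually gives $\tr_{g(t)}\big(\frac{\partial}{\partial t}\Ric_t\big)=\Delta_{g(t)}s(t)=-\bar\Delta_{g(t)}s(t)$, so \eqref{GrindEQ__5_} carries a sign error that would interchange the roles of \eqref{Eq-02a} and \eqref{Eq-02b}; this affects your argument and the paper's equally.
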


\begin{proposition}\label{P-02}
Let $M$ be a smooth manifold with the Hamilton's Ricci flow \eqref{GrindEQ__1_} for a family $g(t)$ of complete Riemannian metrics defined on a time interval $J\subset[0,\infty)$. If for each $t\in J$, the Ricci tensor $\Ric_t$ is bounded below,  the inequality \eqref{Eq-02a} is satisfied and the scalar curvature $s(t)$ is a nonnegative function from $L^{1}(M,g(t))$ then the flow is trivial.
\end{proposition}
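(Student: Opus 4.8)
The plan is to deduce Proposition~\ref{P-02} from the Proposition immediately preceding it (the stochastically complete version) by means of Yau's criterion, in exact analogy with the way the Corollary of Section~2 was obtained from its companion stochastically complete theorem. In other words, the only new input compared with the preceding Proposition is the passage from the hypotheses ``$(M,g(t))$ complete and $\Ric_t$ bounded below'' to the hypothesis ``$(M,g(t))$ stochastically complete,'' and this passage is supplied by Yau's theorem quoted above.

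First I would fix an arbitrary $t\in J$. Since $(M,g(t))$ is geodesically complete and its Ricci tensor $\Ric_t$ is bounded below, Yau's theorem (see \cite{Y}) guarantees that $(M,g(t))$ is stochastically complete. Because $t\in J$ was arbitrary, the whole family $g(t)$, $t\in J$, is a family of stochastically complete metrics. At this stage every hypothesis of the preceding Proposition holds: the flow \eqref{GrindEQ__1_} is carried by stochastically complete metrics, the inequality \eqref{Eq-02a} is satisfied for each $t\in J$, and $s(t)$ is a nonnegative function of class $L^1(M,g(t))$. Applying that Proposition then gives directly that the flow is trivial, which is the assertion.

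For the record, the analytic substance that I am importing lives in the preceding Proposition: from \eqref{Eq-02a} together with the trace identity \eqref{GrindEQ__5_} one has $\bar\Delta_{g(t)}\,s(t)\ge0$, hence $\Delta_{g(t)}\,s(t)\le0$, so that $s(t)$ is superharmonic; being also nonnegative and in $L^1(M,g(t))$ over a stochastically complete manifold, $s(t)$ must be constant, and the triviality of the flow is read off from this. In the present statement I simply inherit this conclusion.

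Accordingly, the proof of Proposition~\ref{P-02} offers no real obstacle, since the work is delegated to Yau's theorem and to the preceding Proposition. The single point deserving a word of care is the uniformity in $t$: Yau's criterion is applied separately for each $t\in J$, with a lower bound on $\Ric_t$ that may depend on $t$. This is legitimate precisely because stochastic completeness is a property of the single metric $g(t)$ for each fixed $t$, and that is exactly the form in which the preceding Proposition requires it.
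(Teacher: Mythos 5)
Your reduction is exactly the paper's (implicit) argument: Proposition~\ref{P-02} is obtained from the preceding stochastically complete Proposition by invoking Yau's theorem that a complete metric with Ricci curvature bounded below is stochastically complete, precisely mirroring how the Corollary of Section~2 follows from its companion theorem. The proof is correct, including the observation that Yau's criterion may be applied for each fixed $t\in J$ separately.
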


On the other hand, let the Riemannian metric $g(t)$ be only complete for each $t\in J\subset[0,\infty)$.
From the inequality \eqref{Eq-02b} we obtain that $\Delta_{g(t)}\,s(t)\ge 0$, i.e., $s(t)$ is a \textit{subharmonic function}.
S.\,T.~Yau proved in \cite[p. 663]{18} the following:
\textit{Let $u$ be a nonnegative subharmonic function on a complete manifold $(M,g)$, then $\int_{M} u^{q}\,d\vol_{g}=\infty$ for $q>1$, unless $u$ is constant}.

\smallskip

In this case, we obtain the following consequence of Proposition~\ref{P-02}.

\begin{corollary} Let $M$ be a smooth manifold with the Hamilton's Ricci flow \eqref{GrindEQ__1_} for a family $g(t)$ of complete Riemannian metrics defined on a time interval $J\subset[0,\infty)$. If the inequality \eqref{Eq-02b} is satisfied for each $t\in J$ and the scalar curvature $s(t)$ is a nonnegative function from $L^{q}(M,g(t))$ for some $q>1$ then the flow is~trivial.
\end{corollary}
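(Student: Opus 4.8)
The plan is to run the same three-step scheme used for the stochastically complete results, but with Yau's $L^q$-growth dichotomy playing the role that stochastic completeness played before. First I would reduce the differential hypothesis to a potential-theoretic one: combining \eqref{Eq-02b} with \eqref{GrindEQ__5_} gives $\bar\Delta_{g(t)}s(t)\le 0$, hence $\Delta_{g(t)}s(t)\ge 0$ for each $t\in J$, so that $s(t)$ is a nonnegative subharmonic function on the complete manifold $(M,g(t))$ --- this is exactly the observation recorded just before the statement.

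Next I would invoke the quoted theorem of Yau: a nonnegative subharmonic function $u$ on a complete manifold satisfies $\int_M u^q\,d\vol=\infty$ for every $q>1$ unless $u$ is constant. Since by hypothesis $s(t)\in L^{q}(M,g(t))$ for some $q>1$, the integral $\int_M s(t)^q\,d\vol_{g(t)}$ is finite; the only way to avoid a contradiction is that $s(t)$ be constant on $M$ for each $t\in J$, and in particular $\Delta_{g(t)}s(t)\equiv 0$.

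Finally I would convert ``$s(t)$ constant'' into ``$\Ric_t\equiv 0$,'' which puts us in the concluding situation of Proposition~\ref{P-02}. Using the pointwise identity behind \eqref{GrindEQ__3_}, namely $2\,\|\Ric_t\|^2_{g(t)}=\frac{\partial}{\partial t}s(t)-\Delta_{g(t)}s(t)$ coming from \eqref{GrindEQ__2_}, the vanishing $\Delta_{g(t)}s(t)\equiv0$ reduces it to $2\,\|\Ric_t\|^2_{g(t)}=\frac{\partial}{\partial t}s(t)$. Extracting from \eqref{Eq-02b} (through \eqref{GrindEQ__5_}) the correct sign of $\frac{\partial}{\partial t}s(t)$ then forces $\|\Ric_t\|^2_{g(t)}\le 0$, whence $\Ric_t\equiv0$ and the flow \eqref{GrindEQ__1_} is trivial for $t\in J$.

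The step I expect to be the main obstacle is this last one. Yau's theorem yields only that $s(t)$ is spatially constant, which annihilates $\Delta_{g(t)}s(t)$ but leaves the curvature term $2\,\|\Ric_t\|^2_{g(t)}$ in \eqref{GrindEQ__2_} unaccounted for. Eliminating it requires either pinning down the sign of $\frac{\partial}{\partial t}s(t)$ from the hypothesis \eqref{Eq-02b} via \eqref{GrindEQ__5_} --- keeping the Bochner and Laplace--Beltrami sign conventions consistent throughout \eqref{GrindEQ__5_}, \eqref{GrindEQ__2_} and \eqref{GrindEQ__3_} --- or, in the borderline case where the constant value of $s(t)$ could be positive, using the $L^{q}$-finiteness of $s(t)$ to rule out infinite volume and hence to force that constant to vanish. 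This sign and volume bookkeeping is the delicate part that actually delivers $\Ric_t\equiv0$ rather than merely $s(t)=\mathrm{const}$.
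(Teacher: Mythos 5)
Your first two steps coincide exactly with the paper's argument: \eqref{Eq-02b} plus \eqref{GrindEQ__5_} gives $\bar\Delta_{g(t)}s(t)\le 0$, i.e.\ $\Delta_{g(t)}s(t)\ge 0$, so $s(t)$ is a nonnegative subharmonic function, and Yau's $L^q$ theorem then makes $s(t)$ spatially constant. The genuine gap is in your third step, which you correctly flagged as the crux, but neither of your proposed repairs works. The point is that \eqref{Eq-02b} constrains $\tr_{g(t)}\big(\frac{\partial}{\partial t}\Ric_t\big)$, and under the Ricci flow the trace does \emph{not} commute with $\partial/\partial t$: since $\frac{\partial}{\partial t}g^{ij}(t)=2R^{ij}(t)$, one has
\[
 \frac{\partial}{\partial t}\,s(t)=\tr_{g(t)}\Big(\frac{\partial}{\partial t}\Ric_t\Big)+2\,\|\Ric_t\|^2_{g(t)} .
\]
So \eqref{Eq-02b} yields only $\frac{\partial}{\partial t}s(t)\le 2\|\Ric_t\|^2_{g(t)}$, and combining this with the identity $2\|\Ric_t\|^2_{g(t)}=\frac{\partial}{\partial t}s(t)$ (valid once $\Delta_{g(t)}s(t)=0$) produces a tautology, not $\|\Ric_t\|^2\le 0$: the difference between $\frac{\partial}{\partial t}s(t)$ and $\tr_{g(t)}(\frac{\partial}{\partial t}\Ric_t)$ is precisely the term $2\|\Ric_t\|^2$ you are trying to kill, so the argument is circular. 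Indeed, once $s(t)$ is spatially constant, the displayed identity together with \eqref{GrindEQ__2_} gives $\tr_{g(t)}(\frac{\partial}{\partial t}\Ric_t)=\Delta_{g(t)}s(t)=0$, so hypothesis \eqref{Eq-02b} is satisfied with equality \emph{whatever} $\Ric_t$ is; it carries no further information. Contrast this with the theorem of Section~2, where the hypothesis \eqref{Eq-3b} bounds $\frac{\partial}{\partial t}s(t)$ itself, and the ending $2\|\Ric_t\|^2=\frac{\partial}{\partial t}s(t)-\Delta_{g(t)}s(t)\le 0$ really does close; here the hypothesis bounds the wrong quantity. (Your worry about sign bookkeeping is also well founded: the displayed identity is compatible with \eqref{GrindEQ__2_} only if \eqref{GrindEQ__5_} is read with the Laplace--Beltrami operator $\Delta_{g(t)}$ rather than $\bar\Delta_{g(t)}$, so the paper's two formulas are not mutually consistent under the convention $\Delta=-\bar\Delta$.)

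Your fallback also fails: to force the constant value of $s(t)$ to vanish from $s(t)\in L^{q}$ you need ${\rm Vol}(M,g(t))=\infty$ (you wrote the implication backwards --- it is infinite volume that kills a positive constant), and a complete noncompact manifold may well have finite volume; nothing in the hypotheses excludes this. To be fair, this final step is exactly where the paper itself is silent: after invoking Yau it asserts triviality just as in Lemma~\ref{T-03} (``$s=\mathrm{const}$, in this case the flow is trivial''), without explaining how spatial constancy of $s(t)$ yields $\Ric_t\equiv 0$. So you have correctly isolated the missing ingredient of the paper's proof, but your proposal does not supply it.
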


Let $(M, g_0,\,\xi,\,\lambda)$ be a Ricci soliton on a closed manifold $M$. In \cite{21}, the following formula
was proved: $\bar\Delta_{g_0}\Div\xi=\tr_{g_0}({\cal L}_\xi\Ric_0)$  for an arbitrary infinitesimal harmonic transformation~$\xi$.
This equation can be rewritten in the form
\begin{equation}\label{Eq-14}
 \bar\Delta_{g_0}\,s_0 = \tr_{g(0)}({\cal L}_\xi\Ric_0)
\end{equation}
for the case of a Ricci soliton $(M, g_0,\,\xi,\,\lambda)$. If $\tr_{g(0)}({\cal L}_\xi\Ric_0)\ge0$ (or $\tr_{g(0)}({\cal L}_\xi\Ric_0)\le0$) then we obtain $s_0={\rm const}$ from \eqref{Eq-14} by the Bochner maximum principal (see \cite[p. 30--33]{12}; \cite[p. 39]{YK}).
In~this case, by the Green's formula $\int_M(\Div\xi)\,d\vol_{g_0}=0$ we get $\Div\xi=0$, because $\Div\xi=s_0+n\,\lambda={\rm const}$.
On the other hand, the following two equations:
\[
 \Div\xi=0,\quad
 \square\,\theta=0,
\]
where $\theta^\sharp=\xi$, characterize the vector field $\xi$ as an infinitesimal isometry (see \cite[p. 44]{YK}).
For~this vector field $\xi$ we have ${\cal L}_\xi\,g_0=0$, hence $\Ric_0=-\lambda\,g_0$.

 The following theorem can be considered as a consequence of Lemma~\ref{T-03}.

\begin{theorem}\label{T-06}
Let $(M, g_0,\,\xi,\,\lambda)$ be a closed Ricci soliton. If either $\,\tr_{g_0}({\cal L}_\xi\Ric_0)\ge0$ or $\,\tr_{g_0}({\cal L}_\xi\Ric_0)\le0$ then it is an Einstein soliton.
\end{theorem}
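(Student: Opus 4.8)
The plan is to realize Theorem~\ref{T-06} as the soliton analogue of Lemma~\ref{T-03}, using the identity \eqref{Eq-14} as the bridge. The starting point is that for \emph{any} Ricci soliton $(M,g_0,\xi,\lambda)$ the vector field $\xi$ is an infinitesimal harmonic transformation, so no appeal to the gradient structure is needed; in particular the formula $\bar\Delta_{g_0}\,s_0=\tr_{g_0}(\mathcal{L}_\xi\Ric_0)$ of \eqref{Eq-14} is available. Under either hypothesis $\tr_{g_0}(\mathcal{L}_\xi\Ric_0)\ge0$ or $\tr_{g_0}(\mathcal{L}_\xi\Ric_0)\le0$, this identity immediately yields $\bar\Delta_{g_0}\,s_0\ge0$ or $\bar\Delta_{g_0}\,s_0\le0$ on the closed manifold $M$. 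Since $\int_M\bar\Delta_{g_0}\,s_0\,d\vol_{g_0}=0$ and the integrand has a fixed sign, the Bochner maximum principle forces $s_0=\mathrm{const}$, exactly as $s(t)$ was forced to be constant in Lemma~\ref{T-03}.

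Next I would convert the constancy of $s_0$ into $\Div\xi=0$. As recorded in the discussion preceding the theorem, tracing the soliton equation \eqref{GrindEQ__8_} gives $\Div\xi=s_0+n\,\lambda$, so $\Div\xi$ is a constant. Integrating over the closed manifold and using Green's formula $\int_M(\Div\xi)\,d\vol_{g_0}=0$, this constant must vanish; hence $\Div\xi=0$.

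Finally I would upgrade $\Div\xi=0$ to the statement that $\xi$ is an infinitesimal isometry, and here the second consequence of the harmonic-transformation property enters. Because $\bar\Delta\theta=\Ric_0(\xi,\cdot)$ for the $g_0$-dual $1$-form $\theta$ of $\xi$, the Yano Laplacian satisfies $\square\,\theta=\bar\Delta\theta-\Ric_0(\xi,\cdot)=0$. The pair of conditions $\Div\xi=0$ and $\square\,\theta=0$ characterizes $\xi$ as a Killing field, so $\mathcal{L}_\xi g_0=0$, which is precisely the definition of an Einstein soliton; substituting into \eqref{GrindEQ__8_} then gives $\Ric_0=-\lambda\,g_0$, i.e.\ $(M,g_0)$ is Einstein. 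I expect the last step to be the genuinely delicate one: the constancy of $s_0$ by itself supplies only the divergence-free condition, and the Einstein conclusion truly requires the \emph{second} fact $\square\,\theta=0$, which must be harvested from the infinitesimal-harmonic-transformation structure of the soliton rather than from the scalar-curvature hypothesis alone.
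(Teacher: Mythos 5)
Your proposal is correct and follows essentially the same route as the paper: the identity \eqref{Eq-14} (valid because the soliton field is an infinitesimal harmonic transformation), the Bochner maximum principle on the closed manifold to get $s_0=\mathrm{const}$, Green's formula applied to the constant $\Div\xi=s_0+n\,\lambda$ to get $\Div\xi=0$, and finally Yano's characterization of infinitesimal isometries via $\Div\xi=0$ together with $\square\,\theta=0$ (the latter coming from $\bar\Delta\theta=\Ric_0(\xi,\cdot)$), yielding $\mathcal{L}_\xi g_0=0$ and $\Ric_0=-\lambda\,g_0$. This matches the paper's argument step for step, including the observation that the Killing conclusion genuinely needs the harmonic-transformation structure and not just the constancy of $s_0$.
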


From \eqref{Eq-14} we find $\Delta_{g_0}\,s_0 = -\tr_{g(0)}({\cal L}_\xi\Ric_0)$.
Then from $\tr_{g(0)}({\cal L}_\xi\Ric_0)\le0$ we obtain $\Delta_{g_0}\,s_0\ge0$,
i.e., $s_0$ is a subharmonic function.
In this case, if the scalar curvature $s_0$ is a nonnegative function from $L^q(M,g_0)$ for some $q>1$ then $s_0$ is constant.
On the other hand, if $\tr_{g(0)}({\cal L}_\xi\Ric_0)\ge0$ holds then $\Delta\,s_0\le0$, i.e., $s_0$ is a superharmonic function.

In turn, if $(M,g_0)$ is a stochastically complete manifold and $s_0$ is a nonnegative function from  $L^1(M,g_0)$ then $s_0$ is constant. Thus, in both cases we have
\[
 \nabla s_0=0,\quad \tr_{g(0)}({\cal L}_\xi\Ric_0)=0.
\]
From the above, we conclude that $g_0(\Ric_0,{\cal L}_\xi\Ric_0)=0$. Then, from \eqref{GrindEQ__8_} we get $\|\Ric_0\|^2=-\lambda\,s_0$, where $s_0$ is a nonnegative constant. Therefore, if $\lambda>0$ or $\lambda=0$ then $\Ric_0=0$, hence $(M, g_0,\,\xi,\,\lambda)$ is Ricci flat. Moreover, if $s_0=0$ then $\Ric_0=0$, so $(M, g_0,\,\xi,\,\lambda)$ is also Ricci~flat. Therefore, we can formulate the following

\begin{theorem}\label{T-06b}
Let $(M, g_0,\,\xi,\,\lambda)$ be a steady or expanding Ricci soliton with nonnegative scalar curvature $s_0$.
If either $(M, g_0)$ is complete with $s_0\in L^q(M,g_0)$ for some $q>1$ and $\tr_{g(0)}({\cal L}_\xi\Ric_0)\le0$,
or $(M, g_0)$ is stochastically complete with $s_0\in L^1(M,g_0)$ and $\tr_{g(0)}({\cal L}_\xi\Ric_0)\ge0$,
then $(M, g_0)$ is Ricci flat.
\end{theorem}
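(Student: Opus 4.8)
The plan is to funnel both sets of hypotheses through the identity \eqref{Eq-14}, namely $\bar\Delta_{g_0}\,s_0=\tr_{g_0}(\mathcal{L}_\xi\Ric_0)$, so that each alternative first yields $s_0=\mathrm{const}$, and then to convert this constancy into the pointwise vanishing of $\Ric_0$ by a sign argument. Since $\Delta_{g_0}=-\bar\Delta_{g_0}$, the sign of $\tr_{g_0}(\mathcal{L}_\xi\Ric_0)$ controls whether $s_0$ is sub- or superharmonic, which is exactly what the two cases are designed to exploit.

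I would first dispose of the two alternatives in parallel. In the complete case, the hypothesis $\tr_{g_0}(\mathcal{L}_\xi\Ric_0)\le 0$ gives $\Delta_{g_0}\,s_0\ge 0$ by \eqref{Eq-14}, so $s_0$ is a nonnegative subharmonic function lying in $L^q$ for some $q>1$; the Liouville-type theorem of Yau quoted above (\cite[p.~663]{18}) then forces $s_0=\mathrm{const}$. In the stochastically complete case, $\tr_{g_0}(\mathcal{L}_\xi\Ric_0)\ge 0$ gives $\Delta_{g_0}\,s_0\le 0$, so $s_0$ is a nonnegative superharmonic function in $L^1$, and the corresponding result for stochastically complete manifolds (\cite{G2}) again gives $s_0=\mathrm{const}$. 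In either case $s_0$ is constant, hence $\nabla s_0=0$; feeding constancy back into \eqref{Eq-14} also yields $\tr_{g_0}(\mathcal{L}_\xi\Ric_0)=\bar\Delta_{g_0}\,s_0=0$.

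The decisive step is to recover $\Ric_0$ from the two relations $\xi(s_0)=0$ and $\tr_{g_0}(\mathcal{L}_\xi\Ric_0)=0$. For this I would prove the pointwise trace identity
\[
 \tr_{g_0}(\mathcal{L}_\xi\Ric_0)=\xi(s_0)+g_0(\Ric_0,\mathcal{L}_\xi g_0),
\]
which holds for any symmetric $2$-tensor in place of $\Ric_0$: writing the components $(\mathcal{L}_\xi\Ric_0)_{ij}=\xi^k\nabla_k R_{ij}+R_{kj}\nabla_i\xi^k+R_{ik}\nabla_j\xi^k$ and contracting with $g_0^{ij}$, the first term becomes $\xi(s_0)$, while the remaining two collapse to $R^{ik}(\mathcal{L}_\xi g_0)_{ik}$ because $\Ric_0$ is symmetric and the skew part of $\nabla\xi$ drops out. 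Together with $\xi(s_0)=0$ and $\tr_{g_0}(\mathcal{L}_\xi\Ric_0)=0$ this gives $g_0(\Ric_0,\mathcal{L}_\xi g_0)=0$. I expect this tensor computation to be the only delicate point, since it rests on the correct splitting of $\nabla\xi$ into its symmetric and skew-symmetric parts; the analytic Liouville inputs, by contrast, are invoked as black boxes.

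It then remains to substitute the soliton equation \eqref{GrindEQ__8_}, rewritten as $\mathcal{L}_\xi g_0=-2\Ric_0-2\lambda g_0$, into $g_0(\Ric_0,\mathcal{L}_\xi g_0)=0$. Using $g_0(\Ric_0,g_0)=s_0$, this gives $-2\|\Ric_0\|^2-2\lambda s_0=0$, that is, $\|\Ric_0\|^2=-\lambda s_0$. Because the soliton is steady or expanding we have $\lambda\ge 0$, and by assumption $s_0\ge 0$, so the right-hand side is nonpositive while the left-hand side is nonnegative. Hence both sides vanish and $\Ric_0\equiv 0$, so $(M,g_0)$ is Ricci flat, as claimed.
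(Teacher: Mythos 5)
Your proof is correct and follows essentially the same route as the paper: the identity \eqref{Eq-14} combined with the Yau ($L^q$, subharmonic, complete) and Grigor'yan ($L^1$, superharmonic, stochastically complete) Liouville theorems gives $s_0={\rm const}$, after which the soliton equation yields $\|\Ric_0\|^2=-\lambda s_0\le 0$ and hence $\Ric_0\equiv 0$. The only difference is that you state and verify the pointwise trace identity $\tr_{g_0}({\cal L}_\xi\Ric_0)=\xi(s_0)+g_0(\Ric_0,{\cal L}_\xi g_0)$ explicitly, whereas the paper passes from $\nabla s_0=0$ and $\tr_{g_0}({\cal L}_\xi\Ric_0)=0$ to this conclusion without proof and with an apparent misprint (it writes $g_0(\Ric_0,{\cal L}_\xi\Ric_0)=0$ where $g_0(\Ric_0,{\cal L}_\xi g_0)=0$ is the relation actually needed to obtain $\|\Ric_0\|^2=-\lambda s_0$), so your write-up supplies exactly the missing justification.
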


\begin{remark}\rm Recall \cite{GR} that a complete noncompact gradient Ricci soliton $(M, g_0,\,f,\,\lambda)$ is stochastically complete. Moreover, a complete, expanding gradient Ricci soliton with nonnega\-tive scalar curvature $s_0$ from $L^1(M,g_0)$ is isometric to Euclidean space, see \cite{PRS-new}. Thus, Theorem~\ref{T-06b} is an analogue of this statement for a non-gradient, expanding Ricci~soliton.
\end{remark}

Let $(M, g_0,\,f,\,\lambda)$ be a complete gradient shrinking Ricci soliton, then its scalar curvature $s_0$ is nonnegative (see \cite{C2}).
In addition, if $(M, g_0,\,f,\,\lambda)$ is noncompact then ${\rm Vol}(M,g_0)=\infty$ (see~\cite{CZ}).
In this case, if $\tr_{g_0}({\cal L}_\xi\Ric_0)\le0$ for $\xi=\nabla^{g_0}f$ and $s_0\in L^q(M,g_0)$ for some $q>1$, then $s_0$ is constant and $\|\Ric\|^2_{g_0}=-\lambda\,s_0$ (see the proof of Theorem~\ref{T-06b}).
At the same time, we have
\[
 \int_M (s_0)^q\,d\vol_{g_0}=(s_0)^q\,{\rm Vol}(M,g_0)=\infty,
\]
which contradicts to the assumption $s_0\in L^q(M,g_0)$ for some $q>1$. In this case, $s_0=0$, thus $\Ric_0=0$, so $(M, g_0,\,f,\,\lambda)$ is Ricci flat. Therefore, the following theorem holds.

\begin{theorem}
Let $(M, g_0,\,f,\,\lambda)$ be a complete noncompact gradient shrinking Ricci soliton.
If~its scalar curvature $s_0\in L^q(M,g_0)$ for some $q>1$ and $\tr_{g_0}({\cal L}_\xi\Ric_0)\le0$ then $(M, g_0,\,f,\,\lambda)$ is Ricci flat.
\end{theorem}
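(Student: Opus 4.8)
The plan is to show that the hypotheses force the scalar curvature $s_0$ to be a nonnegative subharmonic function on $(M,g_0)$, hence constant, and then to use the infinite volume of a noncompact complete shrinking gradient soliton to push this constant down to zero. Two structural facts will be used at the outset: for a complete gradient shrinking Ricci soliton one has $s_0\ge0$ (see \cite{C2}), and for a noncompact such soliton one has ${\rm Vol}(M,g_0)=\infty$ (see \cite{CZ}).

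First I would convert the hypothesis $\tr_{g_0}({\cal L}_\xi\Ric_0)\le0$ into a statement about $s_0$. Combining \eqref{Eq-14} with the relation $\Delta_{g_0}=-\bar\Delta_{g_0}$ gives $\Delta_{g_0}s_0=-\tr_{g_0}({\cal L}_\xi\Ric_0)\ge0$, so $s_0$ is subharmonic. Since $(M,g_0)$ is complete and $s_0$ is a nonnegative function lying in $L^q(M,g_0)$ for some $q>1$, Yau's theorem (\cite[p.~663]{18}) applies and forces $s_0={\rm const}$.

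Next, with $s_0$ constant I would recover the algebraic identity from the proof of Theorem~\ref{T-06b}: here $\nabla^{g_0}s_0=0$ and $\tr_{g_0}({\cal L}_\xi\Ric_0)=0$ yield $g_0(\Ric_0,{\cal L}_\xi\Ric_0)=0$, and substituting the soliton equation \eqref{GrindEQ__8_} into this gives $\|\Ric_0\|^2_{g_0}=-\lambda\,s_0$. This is precisely where the shrinking case departs from the steady and expanding cases treated in Theorem~\ref{T-06b}: since $\lambda<0$, the quantity $-\lambda\,s_0$ is nonnegative and yields no vanishing of $\Ric_0$ by itself.

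I expect the decisive step to be the elimination of a strictly positive constant value of $s_0$. As $s_0$ is constant and ${\rm Vol}(M,g_0)=\infty$, a value $s_0>0$ would give $\int_M s_0^{\,q}\,d\vol_{g_0}=s_0^{\,q}\,{\rm Vol}(M,g_0)=\infty$, contradicting $s_0\in L^q(M,g_0)$. Hence $s_0=0$, and then $\|\Ric_0\|^2_{g_0}=-\lambda\,s_0=0$ forces $\Ric_0=0$, so $(M,g_0,f,\lambda)$ is Ricci flat.
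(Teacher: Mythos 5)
Your proposal is correct and follows essentially the same route as the paper: nonnegativity of $s_0$ from \cite{C2}, infinite volume from \cite{CZ}, subharmonicity of $s_0$ via \eqref{Eq-14} combined with Yau's theorem to conclude $s_0={\rm const}$, the identity $\|\Ric_0\|^2_{g_0}=-\lambda\,s_0$ imported from the proof of Theorem~\ref{T-06b}, and finally the infinite-volume versus $L^q$ contradiction forcing $s_0=0$ and hence $\Ric_0=0$. The only difference is presentational: you spell out the subharmonicity and Yau steps that the paper compresses into a citation of Theorem~\ref{T-06b}'s proof.
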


\section{Evolution of the Riemannian curvature tensor under the Ricci flow}

The evolution equations of the Riemannian curvature tensor $Rm$ under the Hamilton's flow \eqref{GrindEQ__1_} has the form
(see \cite[p.~119]{1})
\begin{eqnarray*}
 && \frac{\partial }{\partial t}\,R_{ijkl}(t) = \Delta_{g(t)}\,R_{ijkl}(t) +2\,( B_{ijkl}(t) - B_{ijlk}(t) + B_{ikjl}(t) -B_{iljk}(t) )\\
 &&\hskip-5mm
 - g^{pq}(t) R_{ip}(t) R_{qjkl}(t)
 - g^{pq}(t)R_{jp}(t) R_{iqkl}(t)
 - g^{pq}(t) R_{kp}(t) R_{ijql}(t)
 - g^{pq}(t) R_{lp}(t) R_{ijkp}(t) ,
\end{eqnarray*}
where $B_{ijkl}(t) {=}-g^{pr}(t) g^{qm}(t) R_{ipjq}(t) R_{krlm}(t)$,
$R_{ijkl}(t)$ are local components of $Rm(t)$,
$R_{ij}(t)$ are local components of $\Ric(t)$ and $(g^{ij}(t))=g^{-1}(t)$.
It is not difficult to establish that
\begin{equation}\label{E-ineq1}
 g^{jk}(t) g^{il}(t)\frac{\partial}{\partial t}\,R_{ijkl}(t) \ge 0
\end{equation}
Then from the inequality
$g^{jk}(t) g^{il}(t)\frac{\partial}{\partial t}\,R_{ijkl}(t) \ge 0$
we obtain $\Delta_{g(t)}\,s(t)\ge 0$.
For a closed manifold $M$, by the Bochner maximum principle (see [9, p. 30--33]; [26, p. 39]) we find $s(t)={\rm const}$, hence, $\Ric(t)=0$.
If the inequality \eqref{E-ineq1} is satisfied for each $t\in J$, then the flow \eqref{GrindEQ__1_} is trivial.

\begin{lemma}\label{L-03} Let $M$ be a closed smooth manifold with the Hamilton's Ricci flow \eqref{GrindEQ__1_} for a family $g(t)$ of Riemannian metrics, defined on a time interval $J\subset[0,\infty)$. If the inequality \eqref{E-ineq1} is satisfied for each $t\in J$,
then the flow \eqref{GrindEQ__1_} is trivial.
\end{lemma}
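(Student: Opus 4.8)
The plan is to contract the displayed evolution equation for the Riemann tensor $R_{ijkl}(t)$ with the inverse metric factors $g^{jk}(t)g^{il}(t)$ and convert the hypothesis \eqref{E-ineq1} into a statement about $\Delta_{g(t)}\,s(t)$. First I would record the elementary consequences of the flow \eqref{GrindEQ__1_}: differentiating $g^{ac}(t)g_{cb}(t)=\delta^a_b$ together with $\partial_t g_{ab}=-2R_{ab}(t)$ gives $\partial_t g^{ab}(t)=2R^{ab}(t)$, and (in the curvature convention under which the double trace of $Rm(t)$ is the scalar curvature) we have $g^{jk}(t)g^{il}(t)R_{ijkl}(t)=s(t)$. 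Since the Levi-Civita connection of $g(t)$ is metric compatible, contraction commutes with the Laplace--Beltrami operator, so the contracted principal term is exactly $\Delta_{g(t)}\,s(t)$.

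The key computation is to evaluate $g^{jk}(t)g^{il}(t)\,\partial_t R_{ijkl}(t)$. Rather than contracting the quadratic $B_{ijkl}(t)$ and Ricci terms by hand, I would instead differentiate the identity $s(t)=g^{jk}(t)g^{il}(t)R_{ijkl}(t)$ in $t$ via the product rule. The two contributions coming from $\partial_t g^{jk}(t)$ and $\partial_t g^{il}(t)$ each reduce, after the single contractions $g^{il}R_{ijkl}=R_{jk}$ and $g^{jk}R_{ijkl}=R_{il}$, to $2\,\|\Ric_t\|^2_{g(t)}$, whence
\[
 \partial_t s(t) = 4\,\|\Ric_t\|^2_{g(t)} + g^{jk}(t)g^{il}(t)\,\partial_t R_{ijkl}(t).
\]
Substituting the scalar-curvature evolution equation \eqref{GrindEQ__2_}, namely $\partial_t s = \Delta_{g(t)}\,s + 2\,\|\Ric_t\|^2_{g(t)}$, then yields the identity
\[
 g^{jk}(t)g^{il}(t)\,\partial_t R_{ijkl}(t) = \Delta_{g(t)}\,s(t) - 2\,\|\Ric_t\|^2_{g(t)} .
\]

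With this identity in hand the hypothesis \eqref{E-ineq1} becomes $\Delta_{g(t)}\,s(t) - 2\,\|\Ric_t\|^2_{g(t)} \ge 0$, that is $\Delta_{g(t)}\,s(t) \ge 2\,\|\Ric_t\|^2_{g(t)} \ge 0$, so $s(t)$ is subharmonic on the closed manifold $M$ for each $t\in J$. By the Bochner maximum principle a subharmonic function on a closed manifold is constant, hence $s(t)={\rm const}$ and $\Delta_{g(t)}\,s(t)=0$. Feeding this back into the displayed inequality forces $\|\Ric_t\|^2_{g(t)}=0$, i.e.\ $\Ric_t\equiv 0$ for every $t\in J$; then $\partial_t g(t)=-2\,\Ric_t=0$ and the flow \eqref{GrindEQ__1_} is trivial.

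The main obstacle is the bookkeeping in the boxed identity: one must fix the curvature sign convention so that the double trace of $Rm(t)$ equals $+s(t)$ and the single traces equal $+\Ric_t$, and then verify that the quadratic curvature terms assemble into precisely $-2\,\|\Ric_t\|^2_{g(t)}$. The indirect derivation above, routed through $\partial_t s(t)$ and \eqref{GrindEQ__2_}, avoids contracting the $B_{ijkl}(t)$ terms directly and is the cleanest way past this point; it also makes transparent why merely deducing $\Delta_{g(t)}\,s(t)\ge 0$ does not suffice for triviality — it is the sharper inequality $\Delta_{g(t)}\,s(t)\ge 2\,\|\Ric_t\|^2_{g(t)}$, combined with $s={\rm const}$, that forces $\Ric_t\equiv 0$.
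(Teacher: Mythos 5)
Your proof is correct and follows essentially the same route as the paper: contract the curvature evolution equation to obtain $\Delta_{g(t)}\,s(t)\ge 2\,\|\Ric_t\|^2_{g(t)}\ge 0$, invoke the Bochner maximum principle on the closed manifold to get $s(t)={\rm const}$, and feed this back to force $\Ric_t\equiv 0$, hence triviality of the flow. If anything, your writeup is more complete than the paper's: the paper only records the weaker consequence $\Delta_{g(t)}\,s(t)\ge 0$ and then asserts ``$s(t)={\rm const}$, hence $\Ric_t=0$,'' and it is precisely your identity $g^{jk}(t)g^{il}(t)\,\partial_t R_{ijkl}(t)=\Delta_{g(t)}\,s(t)-2\,\|\Ric_t\|^2_{g(t)}$ (derived cleanly via the product rule and \eqref{GrindEQ__2_} rather than by contracting the $B_{ijkl}$ terms) that justifies that final ``hence.''
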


On the other hand, let $M$ be a noncompact smooth manifold. Consider a family $g(t)$ of Riemannian complete metrics, defined on a time interval $J\subset[0,\infty)$. If the inequality \eqref{E-ineq1} is satisfied for each $t\in J$,
then we can conclude that $s(t)$ is subharmonic function for each $t\in J$. 
Due to Yau theorem (see \cite[p.~663]{18}) we formulate the following

\begin{theorem} Let M be a noncompact smooth manifold with the Hamilton's Ricci flow \eqref{GrindEQ__1_} for a family $g(t)$ of Riemannian complete metrics, defined on a time interval $J\subset[0,\infty)$. Let the scalar curvature $s(t)$ of $g(t)$ is nonnegative function from $L^{q}(M, g(t))$ for some $q>1$ and each $t\in J$. In addition, if the inequality \eqref{E-ineq1} is satisfied for each $t\in J$,
then the flow \eqref{GrindEQ__1_} is trivial.
\end{theorem}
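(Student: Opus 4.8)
The plan is to run the noncompact analogue of Lemma~\ref{L-03}, using Yau's theorem on nonnegative subharmonic functions in place of the Bochner maximum principle, and then to extract $\Ric_t\equiv0$ from a sharpened form of the contracted curvature evolution.

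First I would make explicit the identity behind the passage ``\eqref{E-ineq1}$\Rightarrow\Delta_{g(t)}s(t)\ge0$'' used just above. Writing $s(t)=g^{jk}(t)g^{il}(t)R_{ijkl}(t)$ and differentiating in $t$ with $\frac{\partial}{\partial t}g^{ab}(t)=2R^{ab}(t)$ (which follows from \eqref{GrindEQ__1_}), the two metric terms each contribute $2\,\|\Ric_t\|^2_{g(t)}$, so that
\begin{equation*}
 g^{jk}(t)g^{il}(t)\,\frac{\partial}{\partial t}R_{ijkl}(t)=\frac{\partial}{\partial t}s(t)-4\,\|\Ric_t\|^2_{g(t)} .
\end{equation*}
Substituting the scalar curvature evolution \eqref{GrindEQ__2_} then gives the sharp relation
\begin{equation*}
 g^{jk}(t)g^{il}(t)\,\frac{\partial}{\partial t}R_{ijkl}(t)=\Delta_{g(t)}s(t)-2\,\|\Ric_t\|^2_{g(t)} .
\end{equation*}
Thus the hypothesis \eqref{E-ineq1} is equivalent to $\Delta_{g(t)}s(t)\ge2\,\|\Ric_t\|^2_{g(t)}\ge0$, so $s(t)$ is subharmonic for each $t\in J$.

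Next I would apply Yau's theorem \cite[p.~663]{18} for each fixed $t\in J$: since $(M,g(t))$ is complete, $s(t)\ge0$ is subharmonic and belongs to $L^q(M,g(t))$ for some $q>1$, the function $s(t)$ must be constant on $M$, whence $\Delta_{g(t)}s(t)\equiv0$. Feeding this back into the sharp inequality $\Delta_{g(t)}s(t)\ge2\,\|\Ric_t\|^2_{g(t)}$ forces $\|\Ric_t\|^2_{g(t)}\equiv0$, i.e. $\Ric_t\equiv0$, for every $t\in J$; consistently the constant value is $s(t)=\tr_{g(t)}\Ric_t=0$. Since $\frac{\partial}{\partial t}g(t)=-2\,\Ric_t=0$, the metric does not evolve and the flow \eqref{GrindEQ__1_} is trivial.

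The Yau step and the final bookkeeping are routine; the point that needs care is that the crude bound $\Delta_{g(t)}s(t)\ge0$ alone does not yield $\Ric_t=0$ once $s(t)$ is known to be constant. The main obstacle is therefore to retain the full $\|\Ric_t\|^2_{g(t)}$ term when contracting the evolution equation for $R_{ijkl}(t)$, that is, to establish the sharp inequality $\Delta_{g(t)}s(t)\ge2\,\|\Ric_t\|^2_{g(t)}$ rather than merely its sign; everything else is an immediate transcription of the closed-manifold argument of Lemma~\ref{L-03}.
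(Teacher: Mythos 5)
Your proof is correct and follows essentially the same route as the paper: \eqref{E-ineq1} gives subharmonicity of $s(t)$, Yau's theorem \cite[p.~663]{18} gives $s(t)={\rm const}$, and then $\Ric_t\equiv0$, so the flow is trivial. In fact your explicit contracted identity $g^{jk}(t)g^{il}(t)\,\frac{\partial}{\partial t}R_{ijkl}(t)=\Delta_{g(t)}s(t)-2\,\|\Ric_t\|^2_{g(t)}$ supplies the justification that the paper leaves implicit for its final step ``$s(t)={\rm const}$, hence $\Ric_t=0$'' --- a conclusion which, as you rightly note, does not follow from the crude bound $\Delta_{g(t)}s(t)\ge0$ alone.
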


It is well-known that
\[
 {\cal L}_{\xi} R_{ijkl}
 =\xi^{m} \nabla_{m} R_{ijkl} +\nabla_{i}\,\xi^{m} R_{mjkl} +\nabla_{j}\,\xi^{m} R_{imkl} +\nabla_{k}\,\xi^{m} R_{ijml} +\nabla_{l}\,\xi^{m} R_{ijkm},
\]
where ${\cal L}_{\xi}$ is the Lie derivative with respect to an arbitrary smooth vector field $\xi$.
It is not difficult to establish that
\[
 g_{0}^{jk} g_{0}^{il} {\cal L}_{\xi } R_{ijkl} =\xi \left(s_{0} \right)+4\, g_{0}^{jk} g_{0}^{il} R_{ij} \nabla_{k} \xi_{l} ,
\]
where $R_{ij}$
are determined by the metric $g_{0} $ and $(g_{0}^{ij})=g_{0}^{-1}$.
In the case of a closed manifold $M$ we have
\[
 \int_{M}\big(g_{0}^{jk} g_{0}^{il} {\cal L}_{\xi } R_{ijkl}\big)\,d\vol_{g_{0}}
 =\int_{M}\xi(s_{0})\,d\vol_{g_{0}} + 4\int_{M}\big(g_{0}^{jk} g_{0}^{il} R_{ij} \nabla_{k} \xi_{l} \big)\,d\vol_{g_{0}} .
\]
In turn, using the following integral formulas:
\begin{eqnarray*}
 \int_{M}\nabla_{i}\big(g_{0}^{jk} R_{ki} \xi^{i} \big)\, d\vol_{g_{0}}
 &=& \int_{M}\big(g_{0}^{jk} \nabla_{j} R_{ki} \big)\, \xi^{i} \, d\vol_{g_{0}}
 + \int_{M}\big(g_{0}^{jk} g_{0}^{il} R_{ij} \nabla_{k} \xi_{l} \big)\,d\vol_{g_{0}} \\
 &=& \frac{1}{2}\int_{M}\xi(s_{0})\,d\vol_{g_{0}}
 +\int_{M}\big(g_{0}^{jk} g_{0}^{il} R_{ij} \nabla_{k} \xi_{l} \big)\,d\vol_{g_{0}},
\end{eqnarray*}
we conclude that
\[
 \int_{M}\big(g_{0}^{jk} g_{0}^{il} {\cal L}_{\xi} R_{ijkl} \big)\,d\vol_{g_{0}} =-\int_{M}\xi(s_{0})\,d\vol_{g_{0}} .
\]
Then from the inequality
\begin{equation}\label{E-ineq}
 \int_{M}\big(g_{0}^{jk} g_{0}^{il} {\cal L}_{\xi } R_{ijkl} \big)\,d\vol_{g_{0}} \ge 0
\end{equation}
we obtain $\int_{M}\xi (s_{0} )\,d\vol_{g_{0}}\le 0$. Now, consider the Ricci soliton $(M, g_{0}, \xi, \lambda)$.
In our case, $(M, g_{0}, \xi, \lambda)$ must be an Einstein soliton (see \cite[Theorem~3.1]{11} and our Theorem~\ref{T-04}).
As the result, we formulate the following theorem.

\begin{theorem}\label{T-08}
Let $(M, g_{0}, \xi, \lambda)$ be a Ricci soliton on a closed manifold M with condition \eqref{E-ineq} on the Riemannian curvature tensor $Rm$.
Then $(M, g_{0}, \xi, \lambda)$ is an Einstein soliton.
\end{theorem}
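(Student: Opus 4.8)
The plan is to reduce the hypothesis \eqref{E-ineq}, which is a condition on the full Riemannian curvature tensor $Rm$, to the scalar-curvature condition \eqref{E-grad-s-f} already treated in Theorem~\ref{T-04}, and then invoke that theorem. The bridge between the two conditions is a single integral identity expressing the total doubly contracted Lie derivative of $Rm$ purely in terms of $\int_M\xi(s_0)\,d\vol_{g_0}$, so that the two hypotheses turn out to be equivalent (up to sign) on a closed manifold.

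First I would compute the doubly contracted Lie derivative pointwise. Starting from the standard expansion of $\mathcal{L}_\xi R_{ijkl}$ recalled above and contracting with $g_0^{jk}g_0^{il}$, the advection term $\xi^m\nabla_m R_{ijkl}$ contracts to $\xi(s_0)$, while the four connection terms collapse, via the symmetries of $Rm$, into a single multiple of the Ricci--gradient expression, giving
\[
 g_0^{jk}g_0^{il}\,\mathcal{L}_\xi R_{ijkl}=\xi(s_0)+4\,g_0^{jk}g_0^{il}R_{ij}\nabla_k\xi_l .
\]
Integrating over the closed manifold $M$ splits the total into a scalar-derivative part and a Ricci--gradient part.

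Next I would evaluate the Ricci--gradient integral by integration by parts combined with the contracted second Bianchi identity $g_0^{jk}\nabla_j R_{ki}=\tfrac12\nabla_i s_0$. Applying Stokes's theorem to the divergence $\nabla_i\!\big(g_0^{jk}R_{ki}\xi^i\big)$ gives zero on one side and, on the other, $\tfrac12\int_M\xi(s_0)\,d\vol_{g_0}+\int_M g_0^{jk}g_0^{il}R_{ij}\nabla_k\xi_l\,d\vol_{g_0}$. Solving for the Ricci--gradient term and substituting it back into the integrated identity yields
\[
 \int_M g_0^{jk}g_0^{il}\,\mathcal{L}_\xi R_{ijkl}\,d\vol_{g_0}=-\int_M\xi(s_0)\,d\vol_{g_0},
\]
so that the hypothesis \eqref{E-ineq} is exactly equivalent to $\int_M\xi(s_0)\,d\vol_{g_0}\le 0$. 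Since $M$ is closed, every Ricci soliton is a gradient Ricci soliton by Perel'man's theorem, whence $\xi=\nabla^{g_0}f$ and the inequality just obtained is precisely condition \eqref{E-grad-s-f}; Theorem~\ref{T-04} then forces the soliton to be Einstein.

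I expect the main obstacle to be the index bookkeeping in the two contraction computations: verifying that the four connection terms in $\mathcal{L}_\xi R_{ijkl}$ genuinely combine, after using all symmetries of $Rm$, into the single factor $4\,R_{ij}\nabla_k\xi_l$, and confirming the exact constant $\tfrac12$ supplied by the Bianchi identity. A wrong coefficient at either place would destroy the cancellation $1-2=-1$ that produces the clean sign in the final identity, and hence the reduction to Theorem~\ref{T-04}.
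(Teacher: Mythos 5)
Your proposal is correct and follows essentially the same route as the paper: the same contraction identity $g_0^{jk}g_0^{il}\,\mathcal{L}_\xi R_{ijkl}=\xi(s_0)+4\,g_0^{jk}g_0^{il}R_{ij}\nabla_k\xi_l$, the same integration by parts via the contracted second Bianchi identity to obtain $\int_M g_0^{jk}g_0^{il}\,\mathcal{L}_\xi R_{ijkl}\,d\vol_{g_0}=-\int_M\xi(s_0)\,d\vol_{g_0}$, and the same reduction through Perel'man's gradient theorem to the hypothesis \eqref{E-grad-s-f} of Theorem~\ref{T-04}.
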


\begin{remark}\rm In \cite{C} it was proved that any closed steady or expanding Ricci soliton is Einstein.
Therefore, Theorem~\ref{T-08} concerns only a shrinking Ricci soliton. Moreover, Theorem~\ref{T-04} can be considered as a consequence of Lemma~\ref{L-03}.
\end{remark}

\end{document}